\newenvironment{proof}{{\bf Proof}:\ }%
   {~\ \hfill $\Box$\vspace{0,5cm}}
\newtheorem{prop}{Property}[section]
\newtheorem{theorem}{Theorem}[section]
\newtheorem{rmk}{Remark}[section]
\newtheorem{lemma}[theorem]{Lemma}
\newtheorem{coro}[theorem]{Corollary}
\newtheorem{fact}{Fact}[section]
\numberwithin{equation}{section}
\begin{document}
\title{The Complexity of 2-Intersection Graphs of 3-Hypergraphs Recognition for Claw-free Graphs and triangulated Claw-free Graphs}

\author{N. Di Marco\footnotemark[1], A.Frosini\footnotemark[1],
C.\ Picouleau\footnotemark[2]}
\date{\today}

\def\thefootnote{\fnsymbol{footnote}}

\footnotetext[1]{ \noindent
Dipartimento di Matematica e Informatica, Universit\`a di Firenze, Firenze, Italy,. Email: {\tt
niccolo.dimarco@unifi.it,andrea.frosini@unifi.it}}
\footnotetext[2]{ \noindent
Conservatoire National des Arts et M\'etiers, CEDRIC laboratory, Paris (France). Email: {\tt
christophe.picouleau@cnam.fr}}

\maketitle

\begin{abstract}
Given a $3$-uniform hypergraph $H$, its $2$-intersection graph $G$ has for vertex set the hyperedges of $H$ and $ee'$ is an edge of $G$ whenever $e$ and $e'$ have exactly two common vertices in $H$.
Di Marco et al. prove in  \cite{rec2inter} that deciding wether a graph $G$ is the $2$-intersection graph of a $3$-uniform hypergraph is $NP$-complete. The main problems we study concern the class of  claw-free graphs. We show that the recognition problem remains $NP$-complete when $G$ is claw-free graphs but becomes polynomial if in addition $G$ is triangulated.

 \vspace{0.2cm}
\noindent{\textbf{Keywords}\/}: uniform hypergraph, intersection graph, triangulated graph, $NP$-complete.
\end{abstract}

\section{Introduction}\label{intro}

A hypergraph $H=(V,E)$ \cite{Bergehyper} is a generalization of the concept of graph. In detail, it is defined considering a set of vertices $V=\{v_1,\ldots,v_n\}$ and a set $E\subset 2^{\vert V\vert}\setminus\{\emptyset\}$ of hyperedges such that $e\not \subset e'$ for any pair $e,e'$ of $E$. In the case in which  $\vert e\vert=1, \forall e \in E$ we say that $H$ is trivial. 

Similarly to the graph case, the degree of
a vertex $v\in V$ is the number of hyperedges $e\in E$
such that $v\in e$. 
Another important notion in the field of hypergraphs is that of uniformity. We say that $H$ is
$k$-uniform if $\vert e\vert=k$ for all hyperedge $e\in E$.
We also suppose that $H$ has no parallel hyperedges,
i.e., $e\ne e'$ for any pair $e,e'$ of hyperedges. Therefore, a simple
graph (loopless and without parallel edges), is a $2$-uniform
hypergraph.

In this paper we are concerned with the reconstruction of intersection graphs of hypergraphs. In particular, given a $k$-uniform hypergraph $H$, its $l$-intersection graph $G=L_k^l(H),1\le l<k$ is  $G=(E,F)$ where the vertex set is $E$, and $ee'\in F$ if and only if $\vert e\cap e'\vert=l$ (i.e.  two hyperedges of $H$ intersect by exactly $l$ elements). 
Note that a similar definition could be given for general hypergraphs. However, here we focus only on the simpler uniform case.

Starting from the previous idea, new concepts could be introduced.
For example, given a pair of vertices $u, v \in H$, their multiplicity  is the number of edges in $H$ containing both $u$ and $v$. We denote with $m(H)$ the maximum multiplicity among all pairs of vertices.
From this notion follows the concept of linear hypergraphs. In particular, $H$ is linear if and only if $m(H)=1$. 

In general, we define $L_k^l$ as the class of graphs $G$ such that there exists a $k$-uniform hypergraph $H$ such that $G=L_k^l(H)$.  In such a case we say that $H$ is a preimage of $G$ (note that a preimage is not necessarily unique). In particular, note that $L_2^1$ corresponds to the class of line graph. Finally, we denote $L_k^\infty=\bigcup_{i=1}^{k-1}L_k^i$. 

Since we are interested in finding when a graph has a preimage, we use specific labelling to label each vertex of $G$ appropriately.
In particular, for $G=L_k^l(H)$  a $\lambda_k^l$-labelling is a labelling of its vertices such that the label of a vertex $e$ is a $k$-set 

Some previous results are proved in the literature about intersection graphs. In particular in \cite{rec1inter}, Hlin\u en\'y and Kratochv\' il proved that deciding whether a graph $G$ belongs in $L_3^1$ is $NP$-complete. On the other hand, from the characterization of line graphs by Beineke \cite {Beineke} the problem of deciding whether $G$ belongs in $L_2^1$ is polynomial. 

In \cite{rec2inter} Di Marco et al. are interested in the null label problem on hypergraphs and in the reconstruction of graphs in $L_3 ^2$. In particular, in a previous paper, they prove a sufficient condition about the existence of the former \cite{nullLabel} using graphs in that family. Finally, they also proved that the problem of deciding whether a graph $G$ belongs in $L_3^2$ is $NP$-complete. 

In this work, relying on that result, we are interested in studying some subclasses of $L_3 ^2$ in which the problem could be polynomial solvable.
In fact, one can remark that a graph $G\in L_3^2$ is $K_{1,4}$-free. A natural subclass of $K_{1,4}$-free graph is the set of claw-free graphs, thus we are interested in the characterization of them in $L_3^2$. 
In particular, here we show that deciding whether a claw-free graph belongs to $L_3^2$ remains a $NP$-complete problem, but, interestingly, it is polynomial in the subclass of triangulated graphs.

The article is organized as follows. In Section \ref{def} we give the notation and definition of graph theory we use throughout the paper, focusing also on properties of the class $L_3^2$. In section~\ref{sec:claw_free} we study the reconstruction of the subclasses of claw-free and triangulated graphs. Finally, in Section \ref{prop}, we give some properties and complexity results for the graphs in $L_k^l$.

\section{Graphs definitions and notations}\label{def}

In this section we provide some basic definitions, used throughout the paper. The reader is referred to \cite{Bondy} for definitions and notations in graph theory. 

We are concerned with simple undirected graphs $G=(V,E)$, $\vert V\vert=n,\vert E\vert=m$.  For $v\in V$, we define the open neighbourhood $N(v) = \{w \in  V \ \vert \ (v,w) \in E \}$. Similarly, we define $N[v]=N(v)\cup\{v\}$ its closed neighbourhood. The degree of $v\in V$ is $d_G(v)=\vert N(v)\vert$ or simply $d(v)$ when the context is unambiguous. We denote with $\Delta(G),\delta(G)$ the maximal, respectively minimal, degree of a vertex.
A vertex $v$ is {\it universal} if $N[v]=V$. A vertex $v$ is a {\it leaf} if $d(v)=1$. $G$ is $k$-regular when $d(v)=k$ for any $v\in V$.

For $S\subseteq V$, let $G[S]$ denote the subgraph of $G$ {\it induced} by $S$, which has vertex set~$S$ and edge set $\{uv\in E\; |\; u,v\in S\}$. For $v\in V$, we write $G-v=G[V\setminus \{v\}]$.  Similarly, for $S\subsetneq V$ and $v\in V\setminus S$ we write $G[S]+v=G[S\cup \{v\}]$. For $e\in E$, we write $G-e=(V,E\setminus \{e\})$.

For $k\geq 1$, $P_k=u_1-u_2-\cdots-u_k$ is a chordless path if no two vertices are connected by an edge that is not in $P_k$, i.e. if $V_{P_k} = \{u_1, \ldots, u_k\}$ then $G(V_{P_k}) = P_k$.
In a similar fashion, it is possible to define a chordless cycle $C_k=u_1-u_2-\cdots-u_k-u_1$ for $k \geq 3$.
For $k\ge 4$, $C_k$ is called a {\it hole}. A graph without a hole is {\it chordal} or, equivalently, {\it triangulated}.

We say that $S\subseteq V$ is called a {\it clique} if $G[S]$ is a {\it complete graph}, i.e., every pairwise distinct vertices $u,v\in S$ are adjacent. We denote with $K_p$ the clique on $p$ vertices and we say that $C_3=K_3$ is a {\it triangle}. $K_{1,p}$ is the star on $p+1$ vertices, that is, the graph with vertices $\{u,v_1,v_2\ldots,v_p\}$ and edges $uv_1,uv_2,\cdots,uv_p$. $K_{1,3}$ is a {\it claw}.

For $S\subset V$ the clique $G[S]$ is {\it maximal} if for any $v\in V\setminus S$ then $G[S]+v$ is not a clique. When the context is unambiguous a clique will be always a maximal clique. 
In this paper a clique $K$ is {\it small, medium, big} when $\vert K\vert\le 2,3\le\vert K\vert\le 4,\vert K\vert\ge5$, respectively. 

A \emph{cut-edge} in a connected graph $G$ is an edge $e\in E$ such that $G-e$ is not connected.

 For a fixed graph $H$ we write $H\subseteq G$ whenever $G$ contains an induced subgraph isomorphic to $H$. Instead, $G$ is {\it $H$-free} if $G$ has no induced subgraph isomorphic to $H$. 

 \subsection{Properties of graphs in $L_3^2$}\label{sec:claw_free}
 When dealing with reconstruction issues on graphs, it is important to consider the maximal cliques. For such reason, provided $G\in L_3^2$, we give some properties involving its maximal cliques. 

Let $K_3$, the triangle $T=K_3$ with vertices $a,b,c$. Two cases can be detected following from two different hyperedges configurations in the related $3$-uniform hypergraph: either $a=\{1,2,x\},b=\{1,2,y\},c=\{1,2,z\},x\ne y\ne z$ or $a=\{1,2,3\},b=\{1,2,4\},c=\{1,3,4\}$. The first case is defined as {\it positive clique} and the second {\it negative clique}.

A similar situation occurs with $K_4$. In fact, if we consider a clique with four vertices $a,b,c,d$, then again two cases appear: either $a=\{1,2,x\},b=\{1,2,y\},c=\{1,2,z\},d=\{1,2,t\},x\ne y\ne z\ne t$ or $a=\{1,2,3\},b=\{1,2,4\},c=\{1,3,4\},d=\{2,3,4\}$. The first case is denoted as {\it positive clique} and the second {\it negative clique}. 

For bigger cliques, the situation is easier. Let $K_p,p\ge 5,$ the clique with $p$ vertices $a_1,\ldots,a_p$. Then it must be $a_i=\{1,2,x_i\},1\le i\le p$. We defined all these cases as {\it positive}.
Note that, in general, we are referring to positive cliques whenever their labels are composed of only one sharing couple. The other cases are referred to as negative.

To set the notation, when a clique $K$ is positive we denote it by $K^+$ and $K^-$ otherwise. For convenience the clique of two vertices $K_2$ is both positive and negative.

\begin{prop}\label{posneg}
If $K_i,K_j$ are two cliques such that $\vert K_i\cap K_j\vert=2$ then we have $K_i^+$ and $K_j^-$ (or vice versa). 
\end{prop}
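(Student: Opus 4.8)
The plan is to fix a $3$-uniform preimage $H$ of $G$, so that the vertices of $G$ are the hyperedges of $H$, to normalise the labels of the two vertices shared by $K_i$ and $K_j$, then to show that \emph{any} clique passing through a fixed pair of adjacent hyperedges is forced to be either entirely positive or entirely negative, and finally to exclude the two ``bad'' combinations by a short maximality argument. First I would set up notation: since $|K_i\cap K_j|=2$ write $K_i\cap K_j=\{e,f\}$ with $e\neq f$; as $e,f$ lie together in the clique $K_i$ they are adjacent, so $|e\cap f|=2$, and up to relabelling the vertices of $H$ we may take $e=\{1,2,a\}$ and $f=\{1,2,b\}$ with $a\neq b$ and $a,b\notin\{1,2\}$.

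The heart of the argument is classifying the members of a clique through $e$ and $f$. Let $K$ be a clique with $e,f\in K$ and let $g\in K\setminus\{e,f\}$. Adjacency of $g$ with $e$ forces $g\cap e\in\{\{1,2\},\{1,a\},\{2,a\}\}$. If $g\cap e=\{1,2\}$ then $g=\{1,2,c\}$ with $c\notin\{1,2,a,b\}$ (the exclusions of $a,b$ coming from $g\neq e,f$); I will call such a $g$ of \emph{type A}. If $g\cap e=\{1,a\}$ then $g=\{1,a,s\}$ with $s\neq 2$ (as $2\notin g$), and since $g$ is also adjacent to $f=\{1,2,b\}$ while $a\notin f$, the only possibility is $s=b$, giving $g=\{1,a,b\}$; symmetrically $g\cap e=\{2,a\}$ gives $g=\{2,a,b\}$. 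I will call $g\in\{\{1,a,b\},\{2,a,b\}\}$ of \emph{type B}. Now a type-A vertex $\{1,2,c\}$ and a type-B vertex meet in exactly one element ($1$ or $2$), hence are non-adjacent, so $K$ cannot contain both a type-A and a type-B vertex. Therefore either every vertex of $K$ contains the pair $\{1,2\}$, in which case $K$ is positive; or $K$ contains a type-B vertex, in which case $K\subseteq\{e,f,\{1,a,b\},\{2,a,b\}\}$, so $|K|\le 4$ and, since $e\cap\{1,a,b\}=\{1,a\}\neq\{1,2\}=e\cap f$, the clique $K$ is negative.

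To finish, I would apply this dichotomy to $K_i$ and $K_j$, both of which contain $e$ and $f$. If $K_i=K_j$ then $|K_i|=2$, and a $K_2$ is both positive and negative by convention, so there is nothing to prove; thus assume $K_i\neq K_j$, whence (both being maximal) neither contains the other, and it remains to rule out ``both positive'' and ``both negative''. If both are positive, choose $w\in K_i\setminus K_j$: then $\{1,2\}\subseteq w$ and $\{1,2\}$ lies in every vertex of $K_j$, so $w$ is adjacent to all of $K_j$ and $K_j\cup\{w\}$ is a larger clique, contradicting maximality of $K_j$. If both are negative, each contains at least one of the only two type-B vertices $p_1=\{1,a,b\}$, $p_2=\{2,a,b\}$; a common one $p$ would give $\{e,f,p\}\subseteq K_i\cap K_j$, contradicting $|K_i\cap K_j|=2$, so after renaming we may assume $p_1\in K_i\setminus K_j$ and $p_2\in K_j\setminus K_i$. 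Then, as $K_j$ (being negative) has all its vertices in $\{e,f,p_2\}$ and $p_1\cap p_2=\{a,b\}$, the vertex $p_1$ is adjacent to every vertex of $K_j$, so $K_j\cup\{p_1\}$ is again a larger clique, a contradiction. Hence one of $K_i,K_j$ is positive and the other negative.

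I do not expect a deep obstacle here: the only real content is spotting the type-A/type-B split together with the fact that the two types are mutually non-adjacent — which forces any clique through $\{e,f\}$ to be ``homogeneous'' — after which pure maximality does the work. The fiddly part will be the label bookkeeping in the classification step (tracking carefully which labels may or may not coincide) and remembering to dispatch the degenerate $K_2$ case via the stated convention.
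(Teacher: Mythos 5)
Your proof is correct and follows essentially the same route as the paper's: normalise the labels of the two shared vertices to $\{1,2,a\}$ and $\{1,2,b\}$, then rule out the both-positive and both-negative cases by label arithmetic on any further member of either clique. Your write-up is somewhat more complete than the paper's --- you classify all admissible third vertices of a clique through the shared pair, handle the degenerate $K_2$ case, and derive the contradiction from maximality rather than from the paper's unproved assertion that a non-adjacent pair $s\in K_i\setminus K_j$, $t\in K_j\setminus K_i$ exists --- but the underlying argument is the same.
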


\begin{proof}
W.l.o.g. let us consider  $K_i\cap K_j=\{u,v\}$ with $u=\{1,2,3\},v=\{1,2,4\}$ and $s\in K_i\setminus K_j,t\in K_j\setminus K_i$ be such that $st\not\in E$. For contradiction, we assume that $K_i^+,K_j^+$ or $K_i^-,K_j^-$.
Two cases arises: if $K_i^+,K_j^+$, then it holds  $s=\{1,2,5\},t= \{1,2,6\}$ that is not possible.
Lastly, if $K_i^-,K_j^-$, then w.l.o.g., it holds $s=\{1,3,4\}$, again a contradiction when labelling  $t$.
\end{proof}

A simple consequence of the previous Property is the following.

\begin{coro}\label{twomax}

Let  $e$ be an edge of $G\in L_3^2$. Then $e$ is an edge of at most two cliques.
\end{coro}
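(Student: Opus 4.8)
The plan is to argue by contradiction, deriving a clash with Property~\ref{posneg} from the assumption that an edge $e=uv$ of $G\in L_3^2$ lies in three distinct maximal cliques $K_1,K_2,K_3$.

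First I would note that each $K_i$ has at least three vertices. Indeed, if some $K_i$ equalled the edge $\{u,v\}$, then a second maximal clique $K_j$ (with $j\ne i$) would contain a vertex $w$ adjacent to both $u$ and $v$, so $\{u,v,w\}$ would be a clique, contradicting the maximality of $K_i=\{u,v\}$. Hence every $K_i$ is a $K_3$, a $K_4$, or larger, and is therefore classified as exactly one of positive or negative.

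The key technical step is that any two of these cliques meet in exactly the two vertices $u,v$. They certainly share $\{u,v\}$; suppose $K_i\cap K_j$ also contained a vertex $w$. Then $K_i$ and $K_j$ both contain the triangle $T=\{u,v,w\}$, and a short analysis of the $\lambda_3^2$-labels settles the matter: writing $u=\{1,2,3\}$, $v=\{1,2,4\}$, any vertex $z$ adjacent to both $u$ and $v$ satisfies $|z\cap u|=|z\cap v|=2$, which forces either $z=\{1,2,x\}$ with $x\notin\{1,2,3,4\}$, or $z\in\{\{1,3,4\},\{2,3,4\}\}$. Adding the constraint $|z\cap w|=2$ for $T$ positive ($w=\{1,2,5\}$) or $T$ negative ($w=\{1,3,4\}$) leaves, in each case, a set of common neighbours of $T$ that is pairwise adjacent; hence $T$ lies in a unique maximal clique, forcing $K_i=K_j$, a contradiction. (Alternatively one may bypass Property~\ref{posneg} entirely: the same label computation shows the vertices adjacent to both $u$ and $v$ split into the family $\{1,2,x\}$ and the pair $\{1,3,4\},\{2,3,4\}$, no member of the first being adjacent to either member of the second, so every clique on $e$ is contained either in the positive clique on $\{1,2,\cdot\}$ or in $\{u,v,\{1,3,4\},\{2,3,4\}\}$, giving at most two maximal cliques outright.)

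Finally, since $|K_i\cap K_j|=2$ for all three pairs, Property~\ref{posneg} says each pair consists of one positive and one negative clique. This is impossible for three cliques: if, say, $K_1$ is positive, the pairs $\{K_1,K_2\}$ and $\{K_1,K_3\}$ force $K_2$ and $K_3$ to be negative, and then $\{K_2,K_3\}$ is a pair of two negative cliques, contradicting Property~\ref{posneg}. I expect the only genuine work to be the label case-analysis in the third paragraph (equivalently, that a triangle cannot lie in two distinct maximal cliques); once that is in hand, the parity argument is immediate.
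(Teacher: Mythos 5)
Your proof is correct and follows essentially the route the paper intends: the paper gives no explicit argument, presenting the corollary as "a simple consequence" of Property~\ref{posneg}, and your main line is exactly that parity contradiction among three pairwise strongly intersecting cliques. The additional work you do --- ruling out a $K_2$ among the three cliques and verifying that each pairwise intersection is exactly $\{u,v\}$ so that Property~\ref{posneg} genuinely applies (a point the paper only covers later, in Proposition~\ref{intcliq}) --- is a welcome filling-in of details rather than a different approach.
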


Also the following proposition holds.

\begin{prop}\label{intcliq}
Let $K_n,K_m$ be two distinct cliques of $G=(V,E)\in L_3^2$. Then $\vert K_n\cap K_m\vert\le 2$.
\end{prop}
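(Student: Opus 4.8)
The plan is to prove Proposition \ref{intcliq} by contradiction: suppose $K_n$ and $K_m$ are distinct maximal cliques with $|K_n\cap K_m|\ge 3$, and derive a contradiction from the $\lambda_3^2$-labelling. Pick three distinct vertices $a,b,c\in K_n\cap K_m$. Since $a,b,c$ are pairwise adjacent in $G$, they form a triangle, hence by the dichotomy described before Proposition \ref{posneg} their labels are either of positive type $a=\{1,2,x\},b=\{1,2,y\},c=\{1,2,z\}$ with $x,y,z$ distinct, or of negative type, say $a=\{1,2,3\},b=\{1,2,4\},c=\{1,3,4\}$ (up to renaming ground-set elements). I would handle these two configurations separately.

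In the positive case, the three labels share the pair $\{1,2\}$. Since $K_n\ne K_m$ and neither is contained in the other (cliques here are maximal), there exist $s\in K_n\setminus K_m$ and $t\in K_m\setminus K_n$; because $K_n,K_m$ are maximal and distinct, $s$ and $t$ are not adjacent in $G$ (otherwise $K_n\cup\{t\}$ or some larger set would be a clique — more carefully, $s$ adjacent to $t$ together with the fact that both are adjacent to all of $a,b,c$ would force $st\in E$ with $|s\cap t|=2$, and one then argues the two cliques coincide). Now $s$ is adjacent to $a,b,c$, all of which carry $\{1,2\}$; since $|s\cap a|=|s\cap b|=|s\cap c|=2$ and $a,b,c$ already pairwise share exactly $\{1,2\}$, a short case check forces $s=\{1,2,w\}$ for some $w$, i.e. $s$ also carries the pair $\{1,2\}$. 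The same holds for $t$: $t=\{1,2,w'\}$. But then $|s\cap t|=2$ (they share $\{1,2\}$ and the third coordinates differ since $s\ne t$), so $st\in E$, contradicting $st\notin E$.

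In the negative case, with $a=\{1,2,3\},b=\{1,2,4\},c=\{1,3,4\}$, one checks that the only way a new vertex $s$ can have $|s\cap a|=|s\cap b|=|s\cap c|=2$ is $s=\{2,3,4\}$; indeed $s$ must hit each of the three pairs $\{1,2\},\{1,4\}$ (wait — let me be careful: $s$ shares two elements with each of $a,b,c$, and the three "defining pairs" are $\{1,2\}\subset a\cap b$, etc.). Enumerating, the unique label completing $\{1,2,3\},\{1,2,4\},\{1,3,4\}$ to a negative $K_4$ is $\{2,3,4\}$, and no other $3$-set meets all three of $a,b,c$ in exactly two elements. Hence both $s\in K_n\setminus K_m$ and $t\in K_m\setminus K_n$ would have to equal $\{2,3,4\}$, forcing $s=t$ and then $K_n\cup K_m$ to be a single clique, contradicting $K_n\ne K_m$ (and their maximality).

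The main obstacle I expect is the bookkeeping in the negative case: verifying that $\{2,3,4\}$ is genuinely the \emph{only} $3$-set intersecting each of $\{1,2,3\},\{1,2,4\},\{1,3,4\}$ in exactly two elements requires a careful but finite enumeration over which two elements of each label are hit. A secondary subtlety, shared with the proof of Proposition \ref{posneg}, is justifying cleanly that two distinct maximal cliques sharing $a,b,c$ must contain vertices $s,t$ with $st\notin E$ — this is where maximality and the uniqueness-of-completion facts combine, and it should be stated explicitly rather than waved through.
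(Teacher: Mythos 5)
Your proposal is correct and follows essentially the same route as the paper: assume three common vertices, split on whether the shared triangle is positive or negative, and in each case show that the forced labels of $s\in K_n\setminus K_m$ and $t\in K_m\setminus K_n$ contradict either their non-adjacency (positive case) or their distinctness (negative case). The only difference is that you explicitly justify the existence of a non-adjacent pair $s,t$ via maximality of the two cliques, a step the paper's proof asserts without comment.
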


\begin{proof}
For contradiction, assume that $K_n\cap K_m\supseteq\{x,y,z\}$. There exists $v_1\in K_n \setminus K_n\cap K_m, v_2 \in K_m\setminus K_n\cap K_m$ such that $v_1v_2\not \in E$.

If one of the two cliques, say $K_n$, is positive then $x=\{1,2,3\},y=\{1,2,4\},z=\{1,2,5\}$. Thus $v_1=\{1,2,6\},v_2=\{1,2,7\}$, a contradiction. 

Therefore, consider $K_n=K_4^-,i=1,2$. Thus, $x=\{1,2,3\},y=\{1,2,4\},z=\{1,3,4\}$ but $v_1=\{2,3,4\},v_2=\{2,3,4\}$, another contradiction.
\end{proof}

Based on that, for two (maximal) cliques $K_i,K_j$ we say that they are {\it strongly intersecting} when $\vert K_i\cap K_j\vert=2$ and they are {\it weakly intersecting} when $\vert K_i\cap K_j\vert=1$.\\
\section{Reconstruction of claw-free graphs in $L_3^2$}\label{sec:claw_free}
 
 In this section we deal with the recognition of claw-free graphs in $L_3^2$. We initially show that there exist claw-free graphs in $L_3^2$ and then we provide some necessary conditions to check the belonging. However, we prove that, in general, the recognition problem is $NP$-complete for that class.

\subsection{Claw-free graphs in $L_3^2$}

Figure \ref{2intex} shows a claw-free graph belonging to $L_3^2$. This is not always the case, as witnessed by Figure \ref{K5-e}.
In fact, using Proposition~\ref{intcliq} it directly follows that the graph $K_5-e$ has no realization. 

\begin{figure}[H]
   \begin{center}
   \includegraphics[width=15cm, height=2.5cm, keepaspectratio=true]{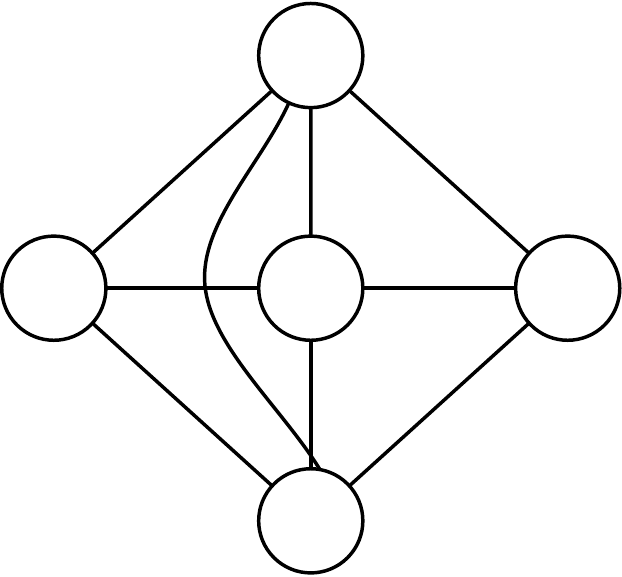}
   \end{center}
   \caption{$K_5-e$ has no realization.}
   \label{K5-e}
\end{figure}

\begin{figure}[H]
   \begin{center}
   \includegraphics[width=15cm, height=5cm, keepaspectratio=true]{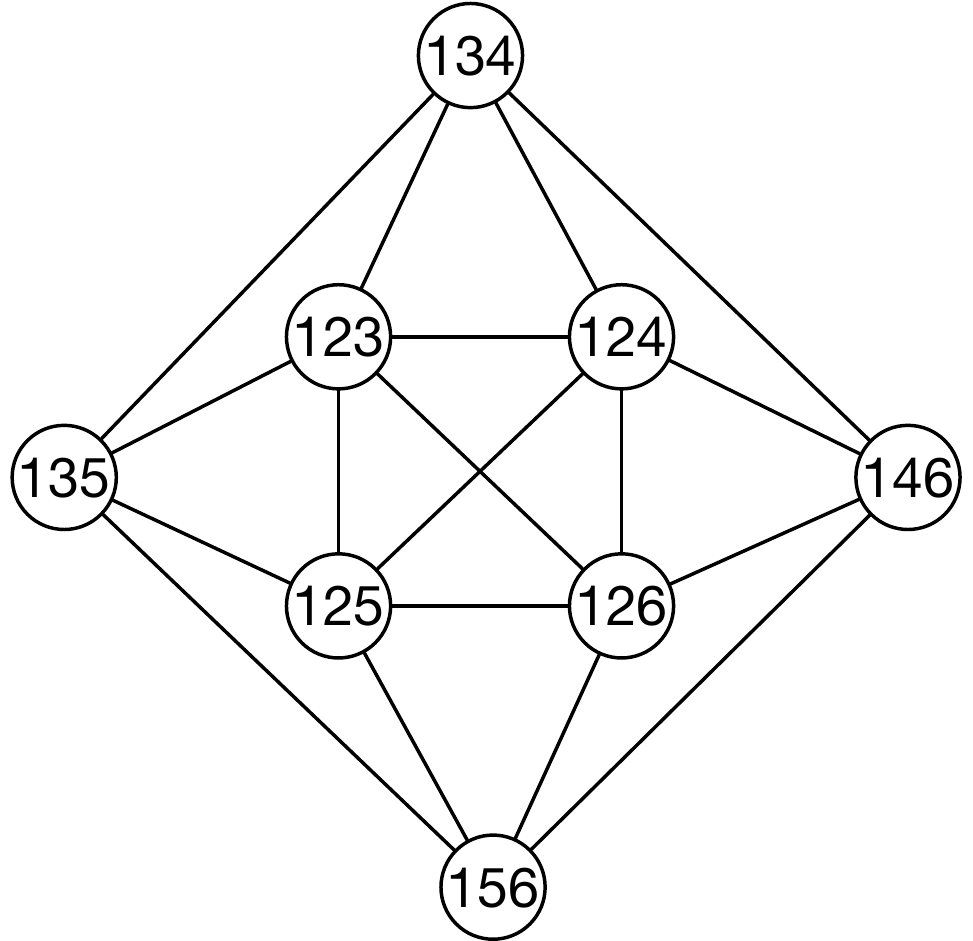}
   \end{center}
   \caption{A graph in $L_3^2$.}
   \label{2intex}
\end{figure}

We now show some examples of claw-free graphs in $L_3^2$ that will be used in the following proofs.

\begin{figure}[H]
   \begin{center}
   \includegraphics[width=12cm, height=3cm, keepaspectratio=true]{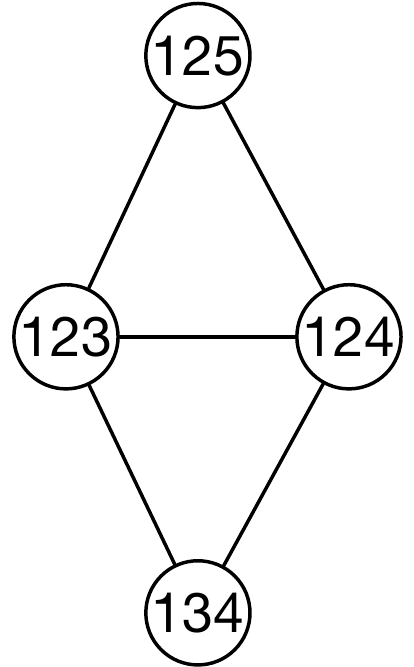}
   \end{center}
   \caption{A realization of the diamond.}
   \label{diamond}
\end{figure}

\begin{coro}\label{diam}
If $G\in L_3^2$ contains, as an induced subgraph, a diamond $D$ consisting of the two triangles $T_1,T_2$ then we have either  $T_1^-,T_2^+$ or  $T_1^+,T_2^-$.
\end{coro}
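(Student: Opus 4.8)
The plan is to use Proposition~\ref{posneg} as the main engine. A diamond $D$ on vertices $\{a,b,c,d\}$ where the triangles $T_1$ and $T_2$ share the edge $cd$ has $ab\notin E$, and $cd$ is an edge of the two distinct maximal cliques containing $T_1$ and $T_2$ respectively (these are distinct because $a$ and $b$ are non-adjacent). First I would argue that $T_1$ and $T_2$ themselves are maximal cliques, or at least that the maximal cliques $K_1\supseteq T_1$ and $K_2\supseteq T_2$ are distinct and strongly intersecting, i.e. $|K_1\cap K_2|=2$: indeed $c,d\in K_1\cap K_2$, and if some further vertex $w$ were common to both then $w$ would be adjacent to all of $a,b,c,d$, but then $w,a,b$ together with $c$ would force $\{w,a,b,c,d\}$ to contain too large a strongly-intersecting configuration — more directly, $|K_1\cap K_2|\le 2$ already by Proposition~\ref{intcliq}, so $|K_1\cap K_2|=2$ exactly.

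Then I would invoke Proposition~\ref{posneg} directly: since $K_1$ and $K_2$ are two cliques with $|K_1\cap K_2|=2$, one of them is positive and the other negative, say $K_1^+$ and $K_2^-$ (or vice versa). The only remaining point is to transfer the $\pm$ status from the maximal cliques $K_1,K_2$ down to the triangles $T_1,T_2$. This should follow from the earlier discussion of positive/negative cliques: a triangle contained in a clique inherits its labelling structure, so $T_1$ contained in the positive clique $K_1$ is positive (its three labels share a single common pair), while $T_2$ contained in $K_2^-$ — here one must check that the three vertices $c,d$ and the third vertex of $T_2$ actually realize the negative pattern and not accidentally a positive one; but $K_2^-$ on at least three vertices with the negative labelling has every triple of its vertices realizing the negative pattern (since $|K_2|\le 4$ by Proposition~\ref{intcliq}, $K_2$ is exactly $T_2$ or $K_4^-$, and in $K_4^-$ every triangle is negative), so $T_2^-$.

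The main obstacle I expect is the bookkeeping needed to justify that $T_1$ and $T_2$ are (or are contained in) genuinely distinct maximal cliques and that the positive/negative labels descend cleanly to the triangles — in particular ruling out the degenerate possibility that one of the two triangles is ``accidentally'' positive while sitting inside what looks like a larger negative clique, or that the two triangles lie in a single common larger clique. Once that structural setup is pinned down, the conclusion is an immediate corollary of Proposition~\ref{posneg}, which is why it is stated as a corollary rather than a standalone proposition.
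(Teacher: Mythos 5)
Your proof is correct and follows the route the paper intends: the corollary is an immediate application of Proposition~\ref{posneg}, since the two triangles of the diamond (or the maximal cliques containing them, which are distinct because $a$ and $b$ are non-adjacent) intersect in exactly the two shared vertices, and the non-adjacent private vertices $a,b$ are precisely the pair $s,t$ that the proof of Proposition~\ref{posneg} requires, after which the $\pm$ labels descend to the triangles as you describe. One minor slip: the bound $\vert K_2\vert\le 4$ for a negative clique does not follow from Proposition~\ref{intcliq} (which bounds the intersection of two cliques, not the size of one) but from the classification given before Proposition~\ref{posneg}, namely that every clique on at least five vertices is positive; the conclusion is unaffected.
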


\begin{figure}[H]
   \begin{center}
   \includegraphics[width=15cm, height=2.5cm, keepaspectratio=true]{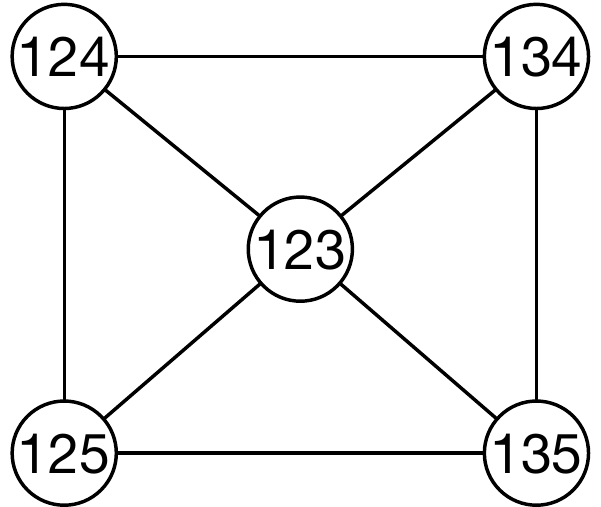}
   \end{center}
   \caption{A realization of the $4$-wheel $W_4$.}
   \label{W4}
\end{figure}

\begin{coro}\label{W4}
If $G\in L_3^2$ contains, as an induced subgraph, a
$4$-wheel $W_4$ consisting of the four triangles $T_1=\{a,b,c\},T_2=\{a,c,d\},T_3=\{a,d,e\},T_4=\{a,b,e\}$ then we have either $T_1^-,T_2^+,T_3^-,T_4^+$ or $T_1^+,T_2^-,T_3^+,T_4^-$.
\end{coro}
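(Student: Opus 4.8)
The plan is to argue by a combination of Proposition~\ref{posneg} (applied to the four triangles pairwise) and Corollary~\ref{diam}, which handles two triangles sharing an edge. First I would observe that $W_4$ contains four ``consecutive'' diamonds: $T_1\cup T_2$ (sharing edge $ac$), $T_2\cup T_3$ (sharing edge $ad$), $T_3\cup T_4$ (sharing edge $ae$), and $T_4\cup T_1$ (sharing edge $ab$). By Corollary~\ref{diam}, each such pair must consist of one positive and one negative triangle; hence the signs of $T_1,T_2,T_3,T_4$ must alternate around the wheel. Since the cycle $T_1,T_2,T_3,T_4,T_1$ has even length, an alternating assignment of signs is consistent, leaving exactly the two options in the statement: either $(T_1^-,T_2^+,T_3^-,T_4^+)$ or $(T_1^+,T_2^-,T_3^+,T_4^-)$.

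The remaining work is to rule out nothing further — i.e., to check that the diamond constraint is the only obstruction and that no additional incompatibility arises from the fact that all four triangles share the common apex $a$. I would do this by exhibiting an explicit $\lambda_3^2$-labelling realizing, say, $(T_1^+,T_2^-,T_3^+,T_4^-)$; the realization of $W_4$ already pictured in Figure~\ref{W4} serves exactly this purpose, so it suffices to point to it, and the other case follows by symmetry (swapping the roles). This shows both sign patterns are genuinely attainable, so the corollary is an ``if and only if'' in spirit, though as stated we only need the forward direction.

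To make the alternation argument fully rigorous I would note that in $W_4$ the edges $ab,ac,ad,ae$ are each shared by exactly two of the triangles, so Corollary~\ref{diam} applies verbatim to each of the four diamonds; there is no edge shared by three triangles (consistent with Corollary~\ref{twomax}). One subtlety: Corollary~\ref{diam} is stated for an \emph{induced} diamond, and in $W_4$ the subgraph on $\{a,b,c,d\}$ is an induced diamond precisely because $bd\notin E$ (and similarly $ce\notin E$), which holds since the rim $b-c-d-e-b$ of $W_4$ is a chordless $4$-cycle. So each of the four pairs of adjacent triangles does induce a diamond, and the hypotheses of Corollary~\ref{diam} are met.

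The main obstacle — though it is mild — is confirming that the two alternating patterns are not over-constrained when glued together at the apex: a priori one might worry that realizing $T_1^+$ forces a specific sharing pair at $a$ that clashes with realizing $T_3^+$ elsewhere. This is dispelled by the explicit realization, so the proof reduces to (i) invoking Corollary~\ref{diam} four times to force alternation, (ii) noting parity makes alternation consistent, and (iii) citing Figure~\ref{W4} for realizability of one pattern and symmetry for the other.
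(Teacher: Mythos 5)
Your proposal is correct and matches the derivation the paper intends: the statement is presented as a corollary of the diamond result (Corollary~\ref{diam}), applied to the four induced diamonds of $W_4$, with the figure supplying realizability. Your additional checks (that the rim being chordless makes each diamond induced, and that even parity makes the alternation consistent) are exactly the details the paper leaves implicit.
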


\begin{figure}[H]
   \begin{center}
   \includegraphics[width=15cm, height=2.5cm, keepaspectratio=true]{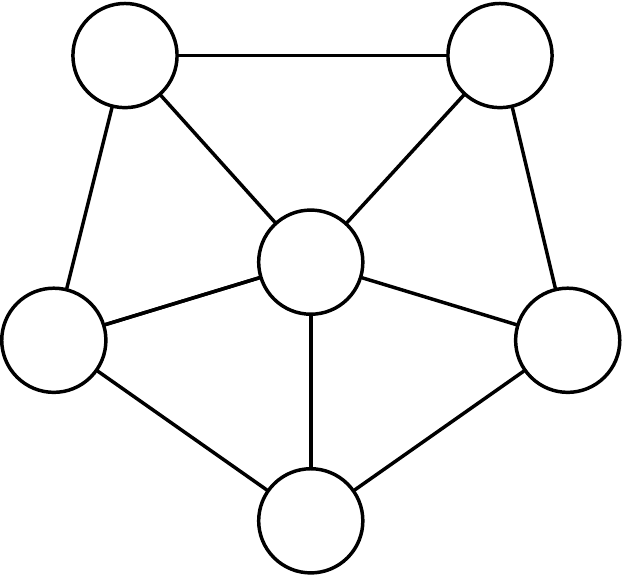}
   \end{center}
   \caption{The $5$-wheel $W_5$.}
   \label{W5}
\end{figure}

\begin{coro}\label{W5}
If $G\in L_3^2$ then it cannot contains the $5$-wheel $W_5$ as an induced subgraph.
\end{coro}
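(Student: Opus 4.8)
The plan is to argue by contradiction: suppose $G \in L_3^2$ contains an induced $W_5$, with hub $a$ adjacent to all of $b_1, b_2, b_3, b_4, b_5$, and rim the cycle $b_1-b_2-b_3-b_4-b_5-b_1$. This $W_5$ contains five triangles through $a$, namely $T_i = \{a, b_i, b_{i+1}\}$ (indices mod $5$), and consecutive triangles $T_i, T_{i+1}$ share the edge $ab_{i+1}$, so each pair $T_i, T_{i+1}$ forms an induced diamond (the ``outer'' vertices $b_i$ and $b_{i+2}$ are non-adjacent since the rim is chordless). First I would extend each $T_i$ to a maximal clique $K_i$; by Proposition~\ref{intcliq} no maximal clique can contain three rim vertices, and since $b_{i-1}b_{i+1} \notin E$ and $b_i b_{i+2}\notin E$, in fact $K_i$ can contain no rim vertex beyond $b_i, b_{i+1}$ together with possibly further vertices outside the $W_5$; what matters is only the sign $K_i^+$ or $K_i^-$ inherited by the triangle $T_i$.

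The key step is to apply Corollary~\ref{diam} to each of the five diamonds formed by $T_i$ and $T_{i+1}$: it forces the signs of $T_i$ and $T_{i+1}$ to be opposite. So if we set $\epsilon_i \in \{+,-\}$ to be the sign of $T_i$, we obtain $\epsilon_i \ne \epsilon_{i+1}$ for all $i = 1, \dots, 5$ with indices mod $5$. But this is a proper $2$-coloring of the odd cycle $C_5$, which is impossible; the parity obstruction closes the argument. (Equivalently: going around $\epsilon_1 \ne \epsilon_2 \ne \epsilon_3 \ne \epsilon_4 \ne \epsilon_5 \ne \epsilon_1$ would force $\epsilon_1 \ne \epsilon_1$.)

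The one point that needs care — and the only place the argument could slip — is justifying that each consecutive pair of triangles really does sit inside an induced diamond of $G$ so that Corollary~\ref{diam} genuinely applies, i.e.\ that the four vertices $\{a, b_i, b_{i+1}, b_{i+2}\}$ induce exactly a diamond and not a $K_4$. This is immediate from $b_i b_{i+2} \notin E$, which holds because the rim $b_1\cdots b_5 b_1$ is a chordless $5$-cycle in the induced $W_5$. A subtle variant worth a remark: one must also check that the labelling-based notion of sign is well defined on the triangle $T_i$ regardless of which maximal clique $K_i \supseteq T_i$ we pick; but by Corollary~\ref{twomax} and Proposition~\ref{posneg} the relevant sharing structure on the pair inside $T_i$ is determined, so the diamond argument is unambiguous. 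Hence no realization exists, and $G$ cannot contain $W_5$ as an induced subgraph. $\Box$
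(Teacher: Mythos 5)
Your proof is correct and is exactly the argument the paper intends: Corollary~\ref{W5} is stated without proof as an immediate consequence of the diamond alternation (Corollary~\ref{diam}) applied around the hub, where the odd cycle $C_5$ makes a proper $2$-colouring of the five triangle signs impossible. Your extra care about induced diamonds (chordless rim) and well-definedness of the sign is sound, though the sign of a triangle is determined directly by its three labels, so the detour through maximal cliques is not needed.
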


\begin{figure}[H]
   \begin{center}
   \includegraphics[width=10cm, height=2.5cm, keepaspectratio=true]{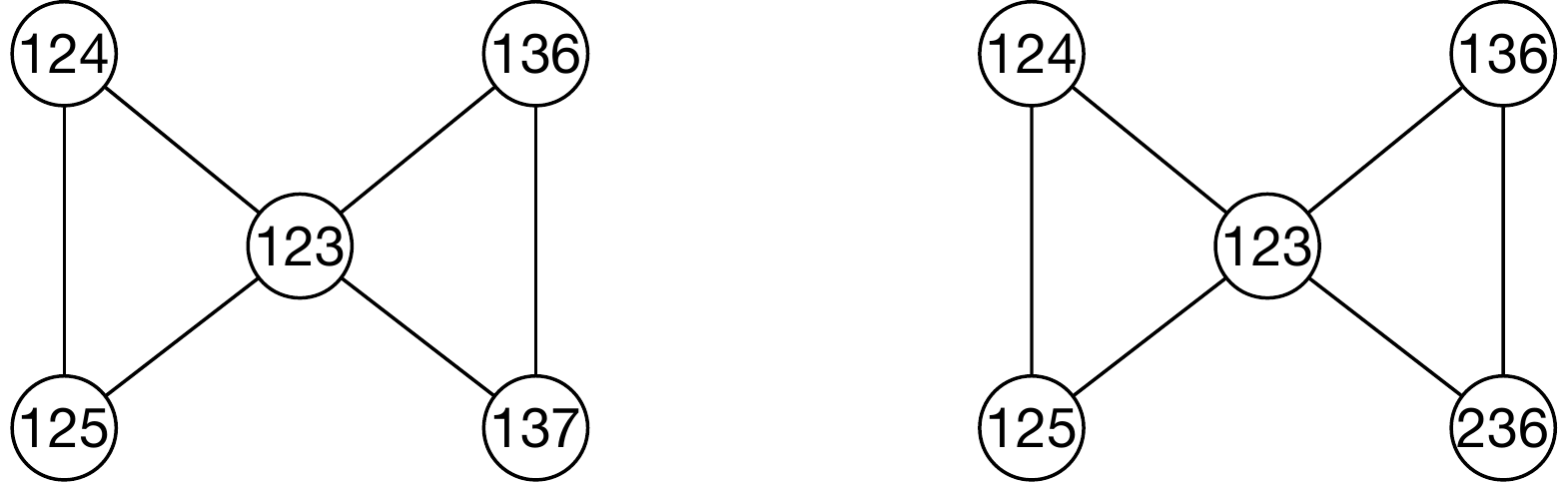}
   \end{center}
   \caption{Two realizations of the butterfly.}
   \label{butterfly}
\end{figure}

\begin{prop}\label{butter}
If $G\in L_3^2$ contains, as an induced subgraph, a butterfly $B$ consisting of the two triangles $T_1,T_2$ then we have either  $T_1^+,T_2^+$ or $T_1^-,T_2^+$ or  $T_1^+,T_2^-$. 
\end{prop}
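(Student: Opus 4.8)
The statement lists three of the four conceivable sign assignments to the pair $(T_1,T_2)$, so it is equivalent to show that the remaining case, $T_1^-$ and $T_2^-$, cannot occur; the plan is therefore to assume $T_1^-,T_2^-$ and derive a contradiction. Write the butterfly as $T_1=\{a,b,c\}$ and $T_2=\{c,d,e\}$, where $c$ is the unique vertex shared by the two triangles, so that in $G$ the vertices $a,b$ are non-adjacent to each of $d,e$, and all five labels are distinct $3$-sets.

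I would first recall the shape of a negative triangle: after renaming the underlying elements of the hypergraph, the three labels of a $K_3^-$ are $\{1,2,3\},\{1,2,4\},\{1,3,4\}$, so a negative triangle uses exactly four elements, exactly one of which (here $1$) lies in all three labels while each of the other three lies in exactly two. Since the three labels play symmetric roles, I may assume $a=\{1,2,3\}$, $b=\{1,2,4\}$, $c=\{1,3,4\}$. Now apply the same description to $T_2$: its distinguished element lies in every label of $T_2$, hence in $c=\{1,3,4\}$, so it is $1$, $3$, or $4$, and if $w$ denotes the fourth element used by $T_2$ then $w\notin\{1,3,4\}$. This gives three cases, and in each of them the labels $d,e$ are forced: if the distinguished element is $1$ then $\{d,e\}=\{\{1,3,w\},\{1,4,w\}\}$; if it is $3$ then $\{d,e\}=\{\{1,3,w\},\{3,4,w\}\}$; if it is $4$ then $\{d,e\}=\{\{1,4,w\},\{3,4,w\}\}$.

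To finish, observe that in the first two cases $\{1,3,w\}\in\{d,e\}$ and $\{1,3,w\}\cap a\supseteq\{1,3\}$, while in the third case $\{1,4,w\}\in\{d,e\}$ and $\{1,4,w\}\cap b\supseteq\{1,4\}$. If $w\neq 2$ this intersection has size exactly $2$, so $a$ (resp.\ $b$) is adjacent in $G$ to a vertex of $T_2\setminus\{c\}$, contradicting the fact that the butterfly is induced; and if $w=2$ the relevant label of $T_2$ equals $a$ (resp.\ $b$), which is impossible since a hypergraph has no two equal hyperedges. Either way we contradict $T_1^-,T_2^-$, which proves the proposition. The only step that needs care is the middle one, namely checking that $d$ and $e$ really are determined once the distinguished element of $T_2$ is pinned down, together with not overlooking the degenerate subcase $w=2$; the rest is a one-line intersection count.
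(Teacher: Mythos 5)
Your proof is correct and takes essentially the same approach as the paper: assume both triangles are negative, normalize the labels of $T_1$, and show that any negative labelling of $T_2$ forces a forbidden adjacency (or a repeated hyperedge) with $T_1\setminus\{c\}$. Your case analysis on the distinguished element of $T_2$ is in fact more complete than the paper's, which stops at ``it is not possible to label $T_2$ so that it is negative'' after pinning down one label.
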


\begin{proof}
Let $T_1=\{a,b,c\},T_2=\{a,d,e\}$. The figure \ref{butterfly} shows two realizations with $T_1^+,T_2^+$ or $T_1^-,T_2^+$ or  $T_1^+,T_2^-$. It remains to show that $T_1^-,T_2^-$ is impossible. Let $a=\{1,2,3\},b=\{1,2,4\},c=\{1,3,4\}$. We have $d=\{2,3,5\}$ but is not possible to label $T_2$ in such a way it is negative.
\end{proof}

The prism $P$ consists of two vertex disjoint triangles $T_1=\{a,b,c\}, T_2=\{d,e,f\}$ plus the three edges $ad,be,cf$.
\begin{figure}[H]
   \begin{center}
   \includegraphics[width=12cm, height=3cm, keepaspectratio=true]{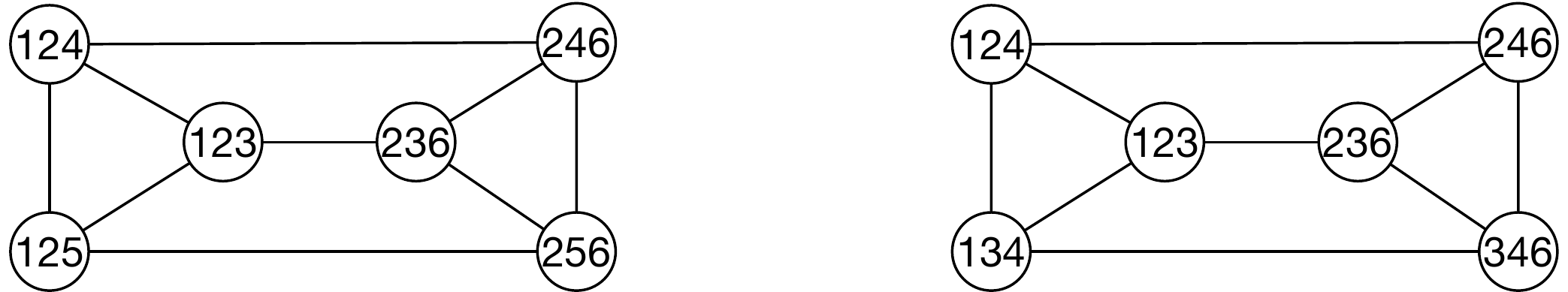}
   \end{center}
   \caption{Two realizations of the prism.}
   \label{prism}
\end{figure}

\begin{fact}\label{prism}
If $G\in L_3^2$ contains, as an induced subgraph, a prism $P$  then we have $T_1^+,T_2^+$ or $T_1^-,T_2^-$.
\end{fact}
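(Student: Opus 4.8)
The plan is to argue by cases on the sign of the triangle $T_1$, showing that in each case the labels of $d$, $e$ and $f$ are forced up to renaming ground-set elements, so that the sign of $T_2$ is determined. Write $T_1=\{a,b,c\}$, $T_2=\{d,e,f\}$ and recall that the only edges between the two triangles are $ad$, $be$, $cf$; thus $d$ is adjacent to $a$ and to no other vertex of $T_1$, and symmetrically for $e,b$ and $f,c$. Every matching edge gives one constraint $\vert x\cap y\vert=2$ and every absent edge between the triangles gives one constraint $\vert x\cap y\vert\le 1$. The engine of the argument is the elementary observation that if a vertex $w$ of $T_2$ is adjacent to two already-labelled vertices whose labels meet in a single element $x$, then $w$ must contain $x$: otherwise $w$ would have to contain two elements from each of two $3$-sets whose only common element is $x$, i.e.\ at least four elements.

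First I would treat $T_1^+$, say $a=\{1,2,3\}$, $b=\{1,2,4\}$, $c=\{1,2,5\}$. From $\vert a\cap d\vert=2$ and $\vert b\cap d\vert,\vert c\cap d\vert\le 1$ one sees $a\cap d$ cannot be $\{1,2\}$, so (using the symmetry exchanging $1$ and $2$) $a\cap d=\{1,3\}$ and $d=\{1,3,d_0\}$; the value $d_0$ cannot lie in $\{1,2,3,4,5\}$ (it would repeat a vertex, give $d=a$, or create an edge $bd$ or $cd$), so $d$ introduces a new element, say $d=\{1,3,6\}$. Since $b$ and $d$ meet only in $1$, the observation forces $1\in e$; running through the two remaining coordinates and discarding the ones that would produce an edge $ea$ or $ec$ leaves exactly $e=\{1,4,6\}$. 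The same step applied to $f$ (adjacent to $c,d,e$, non-adjacent to $a,b$) forces $f=\{1,5,6\}$. Now $d,e,f$ all contain the pair $\{1,6\}$, so $T_2=T_2^+$.

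The case $T_1^-$ is entirely parallel. With $a=\{1,2,3\}$, $b=\{1,2,4\}$, $c=\{1,3,4\}$ the constraints force $a\cap d=\{2,3\}$ and $d=\{2,3,5\}$ with $5$ a fresh element, then $e=\{2,4,5\}$ and $f=\{3,4,5\}$; the triple $\{2,3,5\},\{2,4,5\},\{3,4,5\}$ is a negative triangle, so $T_2=T_2^-$. Since every triangle of a graph in $L_3^2$ is either positive or negative, combining the two cases yields $T_1^+,T_2^+$ or $T_1^-,T_2^-$, matching the two realizations of Figure~\ref{prism}.

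The only real work is the bookkeeping inside each case: at every step one has to check that, among the few labellings of the next vertex of $T_2$ compatible with the two ``intersection $=2$'' constraints, exactly one also satisfies the two ``intersection $\le 1$'' constraints against $T_1$. I expect this to be routine precisely because the single-common-element observation immediately fixes the ``spine'' coordinate of each new label and reduces the remaining choice to a handful of cases, each killed by the requirement of non-adjacency to $a$ (or to $b$).
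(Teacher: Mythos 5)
Your proof is correct and follows essentially the same route as the paper: the paper also fixes a positive labelling of $T_1$ and propagates the forced labels $d=\{1,3,6\}$, $e=\{1,4,6\}$, $f=\{1,5,6\}$ to conclude $T_2$ is positive (phrased as a contradiction with the assumption $T_1^+,T_2^-$, the mixed case $T_1^-,T_2^+$ being symmetric). Your version is merely organized as a direct forcing argument with the negative case worked out explicitly, and justifies the "without loss of generality" steps in more detail.
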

\begin{proof}
For contradiction, assume that $T_1^+,T_2^-$. Let $a=\{1,2,3\},b=\{1,2,4\},c=\{1,2,5\}$. Without loss of generality $d=\{1,3,6\}$. Then we have $e=\{1,6,4\}$. It follows that $f=\{1,5,6\}$, so $T_2$ is positive, a contradiction.
\end{proof}

\begin{figure}[H]
   \begin{center}
   \includegraphics[width=15cm, height=3.5cm, keepaspectratio=true]{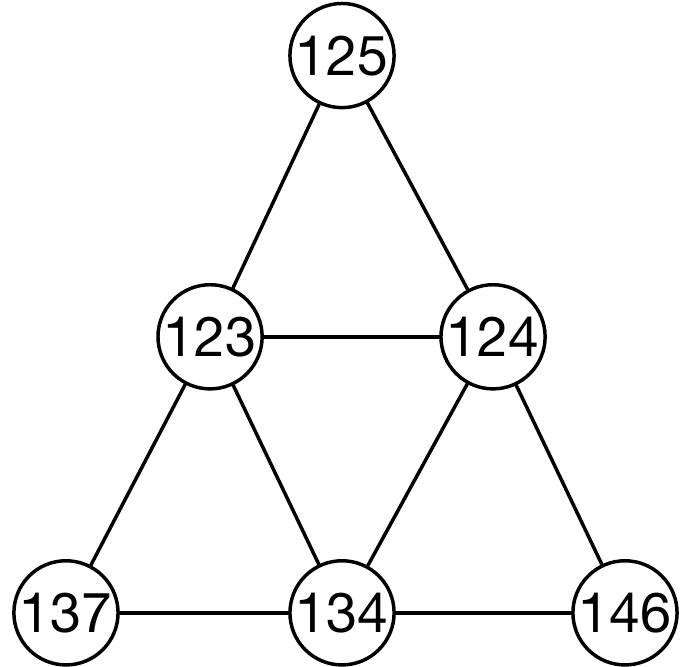}
   \end{center}
   \caption{A realization of the sun $S_3$.}
   \label{3sun}
\end{figure}

\begin{fact}\label{sun}
If $G\in L_3^2$ contains, as an induced subgraph, a
sun $S_3$ consisting of the four triangles $T_1=\{a,b,c\},T_2=\{a,b,d\},T_3=\{a,c,e\},T_4=\{b,c,f\}$ then we have $T_1^-,T_2^+,T_3^+,T_4^+$ with $a=\{1,2,3\},b=\{1,2,4\},c=\{1,3,4\},d=\{1,2,5\},e=\{1,3,6\},f=\{1,4,7\}$.
\end{fact}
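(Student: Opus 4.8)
The plan is to first pin down the signs of the four triangles with Corollary~\ref{diam}, and then to force the labels by a short propagation over the six vertices $a,\dots,f$; nothing about $G$ outside these vertices and the adjacencies of the induced $S_3$ will be used.

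First I would observe that each of $\{a,b,c,d\}$, $\{a,b,c,e\}$, $\{a,b,c,f\}$ induces a diamond: for instance $G[\{a,b,c,d\}]$ carries all edges of $T_1$ and of $T_2$ but not $cd$ (a sun vertex is non-adjacent to the opposite vertex of the central triangle), so it is a diamond whose two triangles are exactly $T_1$ and $T_2$, and similarly for the other two. Corollary~\ref{diam} then gives that $T_1$ has the sign opposite to each of $T_2,T_3,T_4$, leaving two cases: (i) $T_1^-$ with $T_2^+,T_3^+,T_4^+$, or (ii) $T_1^+$ with $T_2^-,T_3^-,T_4^-$.

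Next I would rule out (ii). Assuming $T_1^+$, take $a=\{1,2,3\}$, $b=\{1,2,4\}$, $c=\{1,2,5\}$ up to relabelling. Since $d\not\sim c$, $d$ cannot contain $\{1,2\}$, and since $|a\cap d|=|b\cap d|=2$ it contains exactly one of $1,2$; by the symmetry $1\leftrightarrow2$ assume $1\in d$, which forces $d=\{1,3,4\}$. In the same way $e\not\sim b$ forbids $\{1,2\}\subseteq e$; the option $1\in e$ gives $e=\{1,3,5\}$ and then $|d\cap e|=2$, contradicting $d\not\sim e$, so $e=\{2,3,5\}$. Repeating the argument for $f$ yields $f=\{2,4,5\}$; but then $|e\cap f|=|\{2,5\}|=2$, so $ef\in E$, contradicting that $\{d,e,f\}$ is independent in $S_3$. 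Hence only case (i) survives, giving $T_1^-,T_2^+,T_3^+,T_4^+$.

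Finally, in case (i) I would put $T_1$ in normal form $a=\{1,2,3\}$, $b=\{1,2,4\}$, $c=\{1,3,4\}$ and propagate. As $T_2$ is positive, the common pair of $a,b,d$ lies in $a\cap b=\{1,2\}$, hence is $\{1,2\}$, so $d=\{1,2,p\}$; from $d\ne a,b$ we get $p\notin\{3,4\}$, and then $c\cap d=\{1\}$ agrees with $d\not\sim c$, so $p$ is a fresh element and $d=\{1,2,5\}$. Likewise $T_3^+$ forces $e=\{1,3,q\}$ with $q\notin\{2,4\}$ (from $e\ne a,c$) and $q\ne5$ (from $e\not\sim d$), so $e=\{1,3,6\}$; and $T_4^+$ forces $f=\{1,4,w\}$ with $w\notin\{2,3\}$, $w\ne5$, $w\ne6$ (from $f\ne b,c$ and $f\not\sim d$, $f\not\sim e$), so $f=\{1,4,7\}$. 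This is exactly the claimed labelling, and Figure~\ref{3sun} exhibits it explicitly. The argument is essentially bookkeeping; the point needing care is case (ii), where no contradiction surfaces until the last comparison $e\cap f$, so one must propagate the forced values of $d$, then $e$, then $f$ in full and remember to invoke every non-edge among $d,e,f$.
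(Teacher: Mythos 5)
Your proof is correct and follows essentially the same strategy as the paper's: reduce to the two alternating sign patterns via the strong intersections of $T_1$ with $T_2,T_3,T_4$, eliminate the pattern $T_1^+,T_2^-,T_3^-,T_4^-$ by propagating the forced labels of $d$, then $e$, then $f$ until $f$ cannot be labelled, and read off the labelling in the surviving case. The only differences are cosmetic: you invoke Corollary~\ref{diam} where the paper cites Corollary~\ref{twomax}, and you derive the final labels in the positive case explicitly where the paper simply points to Figure~\ref{3sun} (your intermediate value $e=\{2,3,5\}$ in the eliminated case is in fact the correct one, where the paper's text has an apparent typo).
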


\begin{proof}
From Corollary \ref{twomax} we have either $T_1^-,T_2^+,T_3^+,T_4^+$ or $T_1^+,T_2^-,T_3^-,T_4^-$.
The figure \ref{3sun} shows a realization with $T_1^-,T_2^+,T_3^+,T_4^+$. Now we assume that $T_1^+,T_2^-,T_3^-,T_4^-$ with $a=\{1,2,3\},b=\{1,2,4\},c=\{1,2,5\}$. Then, w.l.o.g.,  $d=\{1,3,4\}$. It follows that $e=\{2,4,5\}$ but $f$ cannot be labelled.
\end{proof}

\begin{fact}\label{P3}
If $G\in L_3^2$ contains, as an induced subgraph, a path on three vertices $u-v-w$ with $u=\{a,b,c\},w=\{d,e,f\}$ then $\{a,b,c\}\cap\{d,e,f\}\ne\emptyset$.
\end{fact}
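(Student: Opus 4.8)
The plan is to argue directly from the definition of the $2$-intersection graph, using only the fact that each vertex of $G$ carries a $3$-set as label. Since $u-v-w$ is a path, both $uv$ and $vw$ are edges of $G$, so by the definition of $L_3^2$ (equivalently, reading off the $\lambda_3^2$-labelling) we have $\vert u\cap v\vert=2$ and $\vert v\cap w\vert=2$. Write $v=\{p,q,r\}$ for its three elements.

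The key observation is that both $u\cap v$ and $v\cap w$ are $2$-element subsets of the $3$-element set $v$, and two such subsets cannot be disjoint: if they were, their union would contain four distinct elements, contradicting $\vert v\vert=3$. Quantitatively, $\vert (u\cap v)\cap(v\cap w)\vert\ge \vert u\cap v\vert+\vert v\cap w\vert-\vert v\vert=2+2-3=1$. Hence there exists an element $x$ belonging to $u$, to $v$, and to $w$ simultaneously.

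It follows that $x\in u\cap w$, so $u\cap w\neq\emptyset$, that is, $\{a,b,c\}\cap\{d,e,f\}\neq\emptyset$, which is exactly the claim. I would remark in passing that the argument never uses the fact that the path is \emph{induced} (i.e.\ that $uw\notin E$); it relies solely on $uv$ and $vw$ being edges.

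I do not expect a genuine obstacle in this proof: the statement collapses to the elementary pigeonhole fact that two $2$-subsets of a $3$-set must overlap. The only point requiring a little care is the translation step, namely correctly converting ``$u$ adjacent to $v$'' and ``$v$ adjacent to $w$'' in $G$ into the intersection equalities $\vert u\cap v\vert=\vert v\cap w\vert=2$ for the hyperedge labels, after which the counting is immediate.
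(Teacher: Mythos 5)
Your proof is correct: adjacency in $L_3^2$ gives $\vert u\cap v\vert=\vert v\cap w\vert=2$, and two $2$-subsets of the $3$-set $v$ must share an element by inclusion--exclusion, which then lies in $u\cap w$. The paper states this fact without proof, evidently regarding it as immediate, and your argument (including the observation that the induced hypothesis is not needed) is exactly the intended one.
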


As mentioned before, we denote with $K_4+v$ is the graph with five vertices $\{a,b,c,d,e\}$ where $\{a,b,c,d\}$ is complete and $e$ is connected to exactly one vertex, say $a$.

\begin{fact}\label{K4+v}
If $G\in L_3^2$ contains $K_4+v$, as an induced subgraph, then the clique $K_4$ of $K_4+v$ is positive.
\end{fact}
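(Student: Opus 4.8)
The plan is to argue by contradiction and keep everything local to the five vertices of the induced $K_4+v$. Write the $K_4$ on $\{a,b,c,d\}$ and let $e$ be the extra vertex, adjacent (inside the induced subgraph) to $a$ only. Suppose $G\in L_3^2$ realizes this configuration but the clique $K_4$ is \emph{not} positive. By the classification of the admissible labellings of a $K_4$ recalled in Section~\ref{def} (the two cases preceding Proposition~\ref{posneg}), a $K_4$ that is not positive is negative, so after renaming the ground-set elements we may assume the canonical negative labelling $a=\{1,2,3\}$, $b=\{1,2,4\}$, $c=\{1,3,4\}$, $d=\{2,3,4\}$.

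Next I would turn the adjacency pattern of $e$ into set-intersection constraints. Since $e$ is adjacent to $a$ we have $|e\cap a|=2$; since $e$ is non-adjacent to $b,c,d$ and $e$ is distinct from each of them (no parallel hyperedges, so no two distinct $3$-sets can meet in three elements), we have $|e\cap b|,\,|e\cap c|,\,|e\cap d|\le 1$. In particular $e$ shares exactly one of the three $2$-element subsets $\{1,2\},\{1,3\},\{2,3\}$ of $a$, which gives a clean three-way case split.

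The punchline is the observation that each of those three pairs of $a$ is contained in one of the remaining vertices of the $K_4$: $\{1,2\}\subset b$, $\{1,3\}\subset c$, $\{2,3\}\subset d$. So whichever pair $e$ contains, $e$ meets the corresponding vertex of the $K_4$ in at least two elements, hence in exactly two (again using $e\neq b,c,d$), which makes that edge present in $G$ and contradicts that $e$ is non-adjacent to it. All three cases are handled identically, so the $K_4$ must be positive. I do not expect a real obstacle here; the only points needing a moment's care are the reduction ``not positive $\Rightarrow$ negative'' for a $K_4$ and the routine remark that distinct $3$-uniform hyperedges cannot intersect in three elements, both of which are immediate from the definitions.
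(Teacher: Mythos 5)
Your proof is correct and follows essentially the same route as the paper's: assume the $K_4$ is negative with the canonical labelling, observe that $e$ must contain one of the three pairs $\{1,2\},\{1,3\},\{2,3\}$ of $a$, and derive a forbidden adjacency to $b$, $c$, or $d$. The paper simply treats one case ``w.l.o.g.'' where you spell out all three, and your explicit remarks about distinct hyperedges not meeting in three elements make the argument slightly more careful than the original.
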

\begin{proof}
For contradiction, assume that the clique on four vertices $G[\{a,b,c,d\}]$ is negative: $a=\{1,2,3 \},b=\{1,2,4\},c=\{ 1,3,4 \},d= \{ 2,3,4 \}$. Then, without loss of generality, $e=\{1,2,5\}$. So $be\in E$, a contradiction.
\end{proof}

Note that all the graphs we have considered in this subsection are claw-free.

\subsection{Recognition for claw-free graphs in $L_3^2$}\label{sec:clawfree}
In \cite{rec2inter} authors prove that recognizing whether a graph $G$ is in $L_3^2$ is $NP$-complete. Since a graph in $L_3^2$ is $K_{1,4}$-free we are concerned with the subclass of claw-free graphs ($K_{1,3}$-free graphs).

We will prove that the problem of deciding if $G\in L_3^2$ is $NP$-complete. To reach our goal we need an intermediate problem that is defined and proved $NP$-complete below.

\subsubsection*{The $2$-labelling intersection ($2LI$) problem}

Let us consider a simple graph $G=(V,E)$ and a partition of its edge-set  $E$ into two subsets $E_w$ and $E_s$, i.e. $E=E_w\cup E_s$
and $E_w\cap E_s=\emptyset$. We call {\em weak edges}
the edges in $E_w$ and {\em strong edges} those in $E_s$.

We define a function $\varphi$, say a {\em $2$-labelling}, that
associates to each vertex $v\in V$ a pair of labels
$\{a_v,b_v\}$ such that:

\begin{description}
\item{$i)$} if $v_1\ne v_2$, then $\varphi(v_1)\ne
\varphi(v_2)$;
\item{$ii)$} if $v_1v_2\in E_w$, then $\varphi(v_1) \cap
\varphi(v_2) =\emptyset$;
\item{$iii)$} if $v_1v_2\in E_s$, then $|\varphi(v_1) \cap
\varphi(v_2)|=1$.
\end{description}

See Fig.\ref{2lab} for an example.

\begin{figure}[H]
   \begin{center}
   \includegraphics[width=15cm, height=3.5cm, keepaspectratio=true]{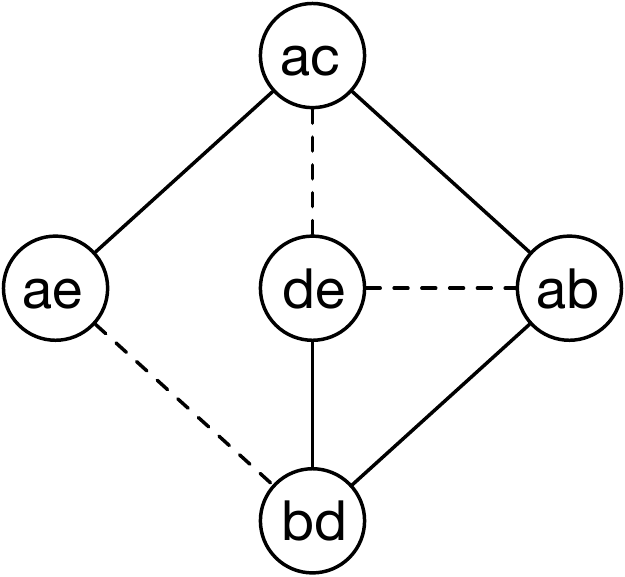}
   \end{center}
   \caption{A graph with a $2$-labelling
$\varphi$. Weak and strong edges are
represented by dotted and straight lines, respectively, while the
nodes show the pair of elements associated by $\varphi$.}
   \label{2lab}
\end{figure}

The $2LI$ problem,
provided in its decision form, follows

\begin{center}
\begin{boxedminipage}{.99\textwidth}
\textsc{\sc  $2$-labelling intersection ($2LI$)} \\[2pt]
\begin{tabular}{ r p{0.8\textwidth}}
\textit{~~~~Instance:} &a simple graph $G=(V,E)$ and a partition of its
edges into $E_w$ and $E_s$.\\
\textit{Question:} &does $G$ admit $\varphi$ a $2$-labelling of its vertices?
\end{tabular}
\end{boxedminipage}
\end{center}

We show the NP-completeness of $2LI$ by a reduction that involves the problem $3-SAT$ (LO2 in \cite{GJ})

\begin{center}
\begin{boxedminipage}{.99\textwidth}
\textsc{\sc  $3$-SAT} \\[2pt]
\begin{tabular}{ r p{0.8\textwidth}}
\textit{~~~~Instance:} &a set $U$ of variables, a collection $C$ of
clauses over $U$ such that each clause $c\in C$ has $|c|=3$.\\
\textit{Question:} &Is there a satisfying truth assignment for $C$?
\end{tabular}
\end{boxedminipage}
\end{center}

Given an instance $A$ of $3$-Sat, we construct a graph $G_A=(V_A,E_A)$
and a partition of its edges into weak and strong edges
$E_{A,w}$ and $E_{A,s}$ such that the $3$-Sat instance admits a
solution if and only if $G_A$ admits a $2$-labelling. This will
imply the NP-completeness of $2LI$.

Hence, we start by providing two graphs' prototypes to model variables
and clauses of the $3$-Sat instance $A$, then we show how to use them
to reach the graph $G_A$.

\medskip
\noindent {\em Representing the variables and the clauses of $A$}
\medskip

Let us define the graph $G_x=(V_x,E_x)$ that will be used to represent
each variable $x\in U$. The set $V_x$ consists of three vertices
$v_1^x$, $v_2^x$, and $v_3^x$, while $E_x$ is partitioned into the
weak edges $E_{x,w}=\{v_2^xv_3^x\}$ and the strong
edges $E_{x,s}=\{v_1^xv_2^x,v_1^xv_2^x\}$ (see
Fig.\ref{gadgetvar}, $(a)$).

\begin{figure}[H]
   \begin{center}
   \includegraphics[width=15cm, height=5cm, keepaspectratio=true]{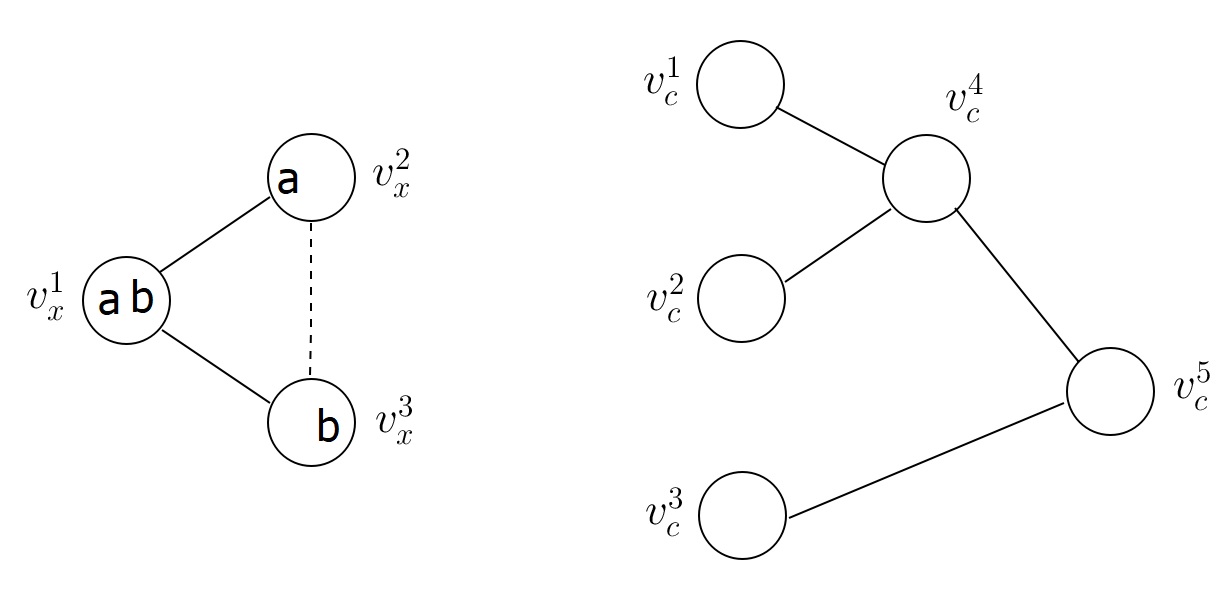}
   \end{center}
   \caption{$(a)$ the graph defined for a variable $x\in U$; (b) the
graph defined for a clause $c\in C$.  In $(a)$ the
nodes show the pair of elements associated by $\varphi$.}
   \label{gadgetvar}
\end{figure}

\begin{lemma}\label{lem:element}
Let $G_x$ be the graph related to variable $x$ and
$\varphi(v_1^x)=\{a,b\}$. It holds, w.l.o.g., that
$a\in \varphi(v_2^x)$ and $b\in \varphi(v_3^x)$.
\end{lemma}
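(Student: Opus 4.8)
\textbf{Proof plan for Lemma~\ref{lem:element}.}
The plan is to unwind the definition of the gadget $G_x$ and the constraints $i)$--$iii)$ imposed by a $2$-labelling $\varphi$ on the three vertices $v_1^x, v_2^x, v_3^x$. Write $\varphi(v_1^x)=\{a,b\}$. The key structural facts are that $v_1^xv_2^x$ and $v_1^xv_3^x$ are strong edges (so each of $\varphi(v_2^x)$ and $\varphi(v_3^x)$ must meet $\{a,b\}$ in exactly one element by $iii)$), while $v_2^xv_3^x$ is a weak edge (so $\varphi(v_2^x)\cap\varphi(v_3^x)=\emptyset$ by $ii)$). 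Note there is a typo in the statement of $E_{x,s}$ in the excerpt; it should read $E_{x,s}=\{v_1^xv_2^x, v_1^xv_3^x\}$, and I will use that reading.

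First I would argue about $\varphi(v_2^x)$. Since $|\varphi(v_1^x)\cap\varphi(v_2^x)|=1$, exactly one of $a,b$ lies in $\varphi(v_2^x)$; say w.l.o.g. $a\in\varphi(v_2^x)$ and $b\notin\varphi(v_2^x)$. Next, consider $\varphi(v_3^x)$. Again exactly one of $a,b$ lies in $\varphi(v_3^x)$. I would rule out the case $a\in\varphi(v_3^x)$: if $a\in\varphi(v_3^x)$ as well, then $a\in\varphi(v_2^x)\cap\varphi(v_3^x)$, contradicting $ii)$ which forces $\varphi(v_2^x)\cap\varphi(v_3^x)=\emptyset$ on the weak edge $v_2^xv_3^x$. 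Hence the unique element of $\varphi(v_1^x)$ inside $\varphi(v_3^x)$ must be $b$, i.e. $b\in\varphi(v_3^x)$. This gives the claimed conclusion $a\in\varphi(v_2^x)$ and $b\in\varphi(v_3^x)$, the ``w.l.o.g.'' referring precisely to the initial naming choice of which of $a,b$ goes into $\varphi(v_2^x)$.

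This argument is essentially a short case analysis, so there is no serious obstacle; the only point requiring a little care is making the ``w.l.o.g.'' explicit — the symmetry being swapping the roles of $a$ and $b$ (equivalently, relabelling $v_2^x\leftrightarrow v_3^x$), which is legitimate since the gadget is symmetric in $v_2^x$ and $v_3^x$. One might also remark, for later use, that injectivity $i)$ is automatically consistent here (the three pairs are pairwise distinct because the strong/weak intersection sizes already differ, or can be arranged by choosing the ``private'' elements of $\varphi(v_2^x)$ and $\varphi(v_3^x)$ distinct), but this is not needed for the statement as phrased.
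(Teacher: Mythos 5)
Your argument is correct and is exactly the reasoning the paper leaves implicit (its proof is just ``immediate by definition of weak and strong edges''): the two strong edges force each of $\varphi(v_2^x),\varphi(v_3^x)$ to meet $\{a,b\}$ in exactly one element, and the weak edge $v_2^xv_3^x$ forbids them from sharing that element, so up to swapping $a$ and $b$ the conclusion follows. You also correctly spot and repair the typo $E_{x,s}=\{v_1^xv_2^x,v_1^xv_2^x\}$, which should read $E_{x,s}=\{v_1^xv_2^x,v_1^xv_3^x\}$.
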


The proof is immediate by definition of weak and strong edges.\\

The gadget $G_x$ will also be used to represent the dichotomy of the
truth values in the final graph. In particular, from now on, we
consider the truth values labels $T$ and $F$. We define
$G_{TF}$ similarly to $G_x$ with the further assumption that $a=T$ and
$b=F$.


On the other hand, we associate to each clause $c\in C$ a graph
$G_c=(V_c,E_c)$ having five vertices
$V_c=\{v_c^1,v_c^2,v_c^3,v_c^4,v_c^5\}$ and four strong connections, i.e., $E_c=E_{c,s}=\{v_c^1v_c^4,v_c^2v_c^4,v_c^3v_c^5,v_c^4v_c^5\}$
(see Fig.\ref{gadgetvar}, $(b)$).

\medskip
\noindent {\em Putting things together}
\medskip

So, starting from an instance $A$ of $3$-Sat, we proceed in defining
the graph $G_A=(V_A,E_A)$. The reader can follow the construction in
Fig.\ref{finalgraph}. We include in $G_A$ $n=|U|$ graphs
$G_{x_1},\dots,G_{x_n}$ representing the variables of $U$, $m=|C|$
graphs  $G_{c_1},\dots,G_{c_m}$ representing the clauses of $C$, and
the graph $G_{TF}$.

The connections between these graphs in $G_A$ are set according to the
following rules:
\begin{description}
\item{$(1)$} the variables are connected by all the possible weak
edges between the vertices $v_x^1$, i.e., for each couple of variable
$x$ and $y$ in $U$, we set the weak edge $v_x^1v_y^1\in E_{A,w}$;
\item{$(2)$} let $x$ be the $i$-th variable involved in the clause
$c$, with $1\leq i \leq 3$. We set the weak edge $v_x^1v_c^i\in
E_{A,w}$, and the strong edge $v_x^3v_c^i\in E_{A,s}$ if $x$ is
negated, $v_x^2v_c^i\in E_{A,s}$ otherwise;
\item{$(3)$} for each variable $x$, we set the strong edges
$v_x^2v^1_{TF},v^3_xv^1_{TF}\in E_{A,s}$ to connect the variable
to the truth values in $G_{TF}$. Furthermore, we set three more strong
edges $v^4_cv^1_{TF},v^5_cv^1_{TF},v^5_cv^2_{TF}\in E_{A,s}$
to connect also each clause $c$ to the truth values in $G_{TF}$.
\end{description}

\begin{figure}[H]
   \begin{center}
   \includegraphics[width=15cm, height=8cm, keepaspectratio=true]{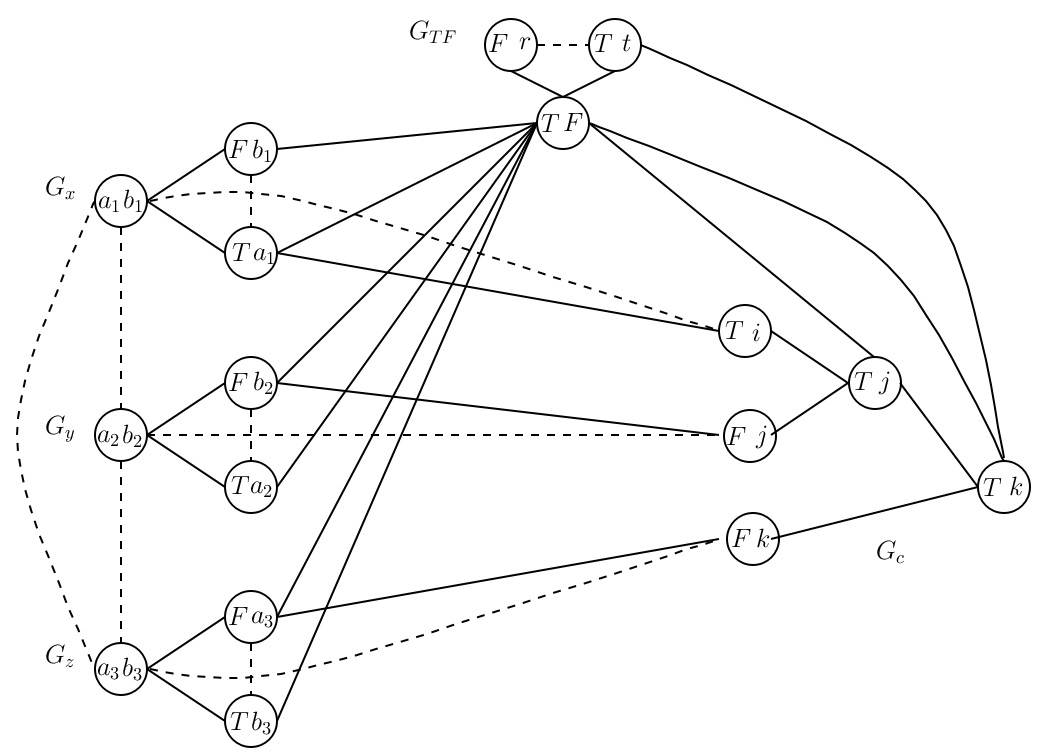}
   \end{center}
   \caption{The graph $G_A$ for a clause $C={\bar x}\vee y\vee z$}
   \label{finalgraph}
\end{figure}

\begin{theorem}\label{2label}
Given an instance $A$ of $3$-Sat, the graph $G_A$ admits a $2$-labelling
if and only if the instance $A$ has a solution.
\end{theorem}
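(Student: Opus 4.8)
The plan is to establish the two implications separately, in the usual gadget‑reduction style; throughout, $\tau$ denotes a truth assignment for $A$ and $\varphi$ a $2$‑labelling of $G_A$, while $T,F$ are the two truth‑value labels.

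\emph{From a satisfying assignment to a labelling.} Suppose $\tau$ satisfies $A$; I would construct $\varphi$ explicitly. Label the truth gadget by $\varphi(v^1_{TF})=\{T,F\}$, $\varphi(v^2_{TF})=\{T,\alpha\}$, $\varphi(v^3_{TF})=\{F,\beta\}$ with $\alpha,\beta$ new symbols. For each variable $x$ pick two new symbols $a_x,b_x$ and set $\varphi(v^1_x)=\{a_x,b_x\}$, $\varphi(v^2_x)=\{a_x,T\}$, $\varphi(v^3_x)=\{b_x,F\}$ if $\tau(x)=\mathrm{true}$, and the mirrored choice (swapping the roles of $T$ and $F$) otherwise; by Lemma~\ref{lem:element} this is consistent with the internal edges of $G_x$, and the strong edges $v^2_xv^1_{TF}$, $v^3_xv^1_{TF}$ hold. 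For a clause $c$ set $\varphi(v^i_c)=\{T,\gamma_i\}$ if the $i$‑th literal of $c$ is true under $\tau$ and $\varphi(v^i_c)=\{F,\gamma_i\}$ otherwise, where each $\gamma_i$ is new and chosen different from the symbols occurring on the incident variable gadget; then the weak edge $v^1_xv^i_c$ and the strong edge $v^2_xv^i_c$ (or $v^3_xv^i_c$) are satisfied. Finally, since at least one literal of $c$ is true, a short case analysis on the pattern of which of the three literals are true determines whether to put $T$ or $F$ into $\varphi(v^4_c)$ and which partner symbol to use (possibly recycling one $\gamma_i$), and then forces $\varphi(v^5_c)$ to have the form $\{T,\cdot\}$; one checks the four strong edges of $G_c$ and the three strong edges $v^4_cv^1_{TF}$, $v^5_cv^1_{TF}$, $v^5_cv^2_{TF}$. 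Condition $i)$ holds globally because every symbol other than $T,F$ is introduced fresh.

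\emph{From a labelling to a satisfying assignment.} Suppose $\varphi$ is a $2$‑labelling. After possibly exchanging the two names, write $\varphi(v^1_{TF})=\{T,F\}$ and, by Lemma~\ref{lem:element}, assume $T\in\varphi(v^2_{TF})$ and $F\in\varphi(v^3_{TF})$. Fix a variable $x$. Lemma~\ref{lem:element} gives $\varphi(v^1_x)=\{a_x,b_x\}$ with $a_x\in\varphi(v^2_x)$ and $b_x\in\varphi(v^3_x)$; the strong edges $v^2_xv^1_{TF}$ and $v^3_xv^1_{TF}$ force $|\varphi(v^2_x)\cap\{T,F\}|=|\varphi(v^3_x)\cap\{T,F\}|=1$, and since $v^2_xv^3_x$ is weak these two single elements differ, so exactly one of $\varphi(v^2_x),\varphi(v^3_x)$ contains $T$. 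Set $\tau(x)=\mathrm{true}$ iff $T\in\varphi(v^2_x)$. To see that $\tau$ satisfies a clause $c$: the weak edge $v^1_xv^i_c$ forbids both $a_x$ and $b_x$ from $\varphi(v^i_c)$, so the strong edge to $v^2_x$ (resp.\ $v^3_x$) forces the element of $\varphi(v^2_x)$ (resp.\ $\varphi(v^3_x)$) other than $a_x$ (resp.\ $b_x$) to lie in $\varphi(v^i_c)$; using condition $i)$, which forbids $\varphi(v^i_c)=\{T,F\}=\varphi(v^1_{TF})$, one deduces that $T\in\varphi(v^i_c)$ exactly when the $i$‑th literal of $c$ is true under $\tau$. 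It then remains to read off from the edges $v^1_cv^4_c,v^2_cv^4_c,v^3_cv^5_c,v^4_cv^5_c$ and $v^4_cv^1_{TF},v^5_cv^1_{TF},v^5_cv^2_{TF}$ that $T\in\varphi(v^i_c)$ for at least one $i\in\{1,2,3\}$; this is a finite case analysis on how $\varphi(v^4_c)$ and $\varphi(v^5_c)$ meet $\{T,F\}$ and $\varphi(v^2_{TF})$, repeatedly invoking condition $i)$ to discard label pairs already used on $G_{TF}$.

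\emph{Main obstacle.} The delicate part is the closing step of the second implication: proving that the five‑vertex clause gadget, wired to $G_{TF}$ exactly as specified, really does force one of $v^1_c,v^2_c,v^3_c$ to carry the label $T$. This needs careful bookkeeping of the labels admissible on $v^4_c$ and $v^5_c$ and systematic use of the distinctness condition $i)$, together with a treatment of the degenerate situation in which some symbol $a_x$ or $b_x$ equals $T$ or $F$ — a case that the weak edges incident to the $v^1$‑vertices are meant to rule out or render harmless. By comparison, the first implication is routine once the case distinction for $\varphi(v^4_c),\varphi(v^5_c)$ is set up; there the only thing to watch is that the freshly chosen symbols remain pairwise distinct, so that condition $i)$ is respected.
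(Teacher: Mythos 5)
Your plan follows the same route as the paper: fix $\varphi(v^1_{TF})=\{T,F\}$, propagate $T$ and $F$ through the variable gadgets into the literal vertices $v^1_c,v^2_c,v^3_c$, and treat $v^4_c,v^5_c$ as an or-gate whose output is pinned to $T$ by the edges of rule $(3)$. The direction ``satisfying assignment $\Rightarrow$ labelling'' as you set it up is sound and completable by the case analysis you sketch. The problem is that you explicitly leave open what you yourself call the main obstacle, namely that a $2$-labelling forces $T$ into at least one of $\varphi(v^1_c),\varphi(v^2_c),\varphi(v^3_c)$; that forcing is the entire content of the reduction, so as written the proposal is a proof plan rather than a proof.

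Moreover, the deferred case analysis does not close with the constraints you have listed. Write $\varphi(v^2_{TF})=\{T,\alpha\}$ with $\alpha\notin\{T,F\}$. Outside $G_c$, the vertex $v^5_c$ is strongly adjacent only to $v^1_{TF}$ and $v^2_{TF}$, and the pair $\{F,\alpha\}$ meets each of $\{T,F\}$ and $\{T,\alpha\}$ in exactly one element. So take all three literals false, $\varphi(v^i_c)=\{F,\gamma_i\}$ for $i=1,2,3$, and set $\varphi(v^4_c)=\{F,\delta\}$ and $\varphi(v^5_c)=\{F,\alpha\}$ with $\delta,\gamma_1,\gamma_2,\gamma_3$ fresh and pairwise distinct: every strong edge of $E_c$ and of rule $(3)$ is satisfied, condition $i)$ holds, and no weak edge touches $v^4_c$ or $v^5_c$. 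Hence the forcing you need is not a consequence of the stated edge set, and the ``finite case analysis, repeatedly invoking condition $i)$'' you promise will not produce it; some additional constraint (for instance a weak edge preventing $F$, or $\alpha$, from appearing on $v^5_c$) has to be identified and exploited. The paper's own proof passes over exactly this point with ``its label contains, w.l.o.g.\ $T$''. The same remark applies to your other flagged issue: rule $(1)$ only places weak edges between the $v^1$-vertices of the variables of $U$, not between them and $v^1_{TF}$, so the degenerate case $a_x\in\{T,F\}$ is not excluded by the construction and must be handled rather than assumed away.
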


\begin{proof}
Suppose that a $2$-labelling $\varphi$ exists. Let $\{a_i,b_i\}$ be
the label associated to the node $v^1_{x_i}$ of $G_{x_i}$. The
connections in $(1)$ assure that the labels $\{a_i,b_i\},1\le i\le n,$ have no common elements.

By the edges in $(2)$, for each variable $x$, one among $v^1_x$ and
$v^2_x$ contains $T$, while $F$ belongs to the other since their
labels can not  intersect by definition of $G_x$. Let us consider a
clause $c$ involving literals of the (distinct) variables $x$, $y$ and
$z$. $\varphi(v_c^1)$ contains, by the edges in $(2)$, the truth
value either in $\varphi(v_x^2)$ or in $\varphi(v_x^3)$ of $G_x$ to
whom it is strongly connected. Note that $\varphi(v_c^1)$ and
$\varphi(v_x^1)$ does not intersect since a weak edge is set
between them in $(2)$.

Now, consider the node $v_c^4$: it is strongly connected both with
$v^1_{TF}$, $v_c^1$, and $v_c^2$, so its label contains $T$ or $F$
according to the values of $\varphi(v_c^1)$ and $\varphi(v_c^2)$. More
precisely, the following three cases arise:
\begin{description}
\item{-} $T\in \varphi(v_c^1)$ and $T\in \varphi(v_c^2)$: it follows
that $T\in \varphi(v_c^4)$;
\item{-} $T\in \varphi(v_c^1)$ and $F\in \varphi(v_c^2)$, or
conversely: it follows that one among $T$ or $F$, but not both,
belongs to $\varphi(v_c^4)$;
\item{-} $F\in \varphi(v_c^1)$ and $F\in \varphi(v_c^2)$: it follows
that $F\in \varphi(v_c^4)$.
\end{description}

Finally, consider the node $v_c^5$: since it is strongly connected to
$v^1_{TF}$ and $v^2_{TF}$, its label contains, w.l.o.g. $T$. By the
three cases above, if $T\in \varphi(v_c^5)$, then it holds $T\in
\varphi(v_c^3)$ or  $T\in \varphi(v_c^4)$.
So, $F\in\varphi(v_c^1)$, $F\in \varphi(v_c^2)$, and $F\in
\varphi(v_c^3)$ if and only if $G_c$ does not admit a
$2$-labelling, and the same holds for  $G_A$. Since the three truth
values in $\varphi(v_c^1)$, $\varphi(v_c^2)$, and $\varphi(v_c^3)$ are
the truth values of the literals in $c$, then a truth assignment for
$c$ exists if and only if a $2$-labelling for $G_c$ does.

It is clear that the converse holds, as the construction is reversible.
\end{proof}

\begin{theorem}
Let $G=(V,E)$ be a claw-free graph. Deciding whether there exists a $3$-uniform hypergraph $H$ such that $G=L_3^2(H)$ is a $NP$-complete problem.
\end{theorem}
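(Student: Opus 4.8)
The plan is to reduce from the $2LI$ problem, which was just shown to be $NP$-complete via Theorem~\ref{2label}, to the recognition of claw-free graphs in $L_3^2$. Given an instance $(G, E_w, E_s)$ of $2LI$, I would build a claw-free graph $G'$ such that $G' \in L_3^2$ if and only if $G$ admits a $2$-labelling. Membership of the target problem in $NP$ is immediate: a preimage $3$-uniform hypergraph $H$ is a polynomial-size certificate that can be checked in polynomial time by comparing pairwise intersections of hyperedges.

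The core of the argument is the gadget construction. The idea is to encode each vertex $v$ of $G$, which in a $2$-labelling carries a pair $\{a_v, b_v\}$, by a small claw-free piece of $G'$ whose realization in $L_3^2$ forces a hyperedge label that essentially plays the role of that pair. The natural approach is to attach to $v$ a fixed claw-free gadget (for instance a diamond, a butterfly, or a $K_4 + v$ configuration, all shown above to lie in $L_3^2$ with constrained positive/negative structure) so that the two ``free'' coordinates of the hyperedge labelling $v$ correspond to $a_v$ and $b_v$, while the third coordinate is pinned down by the gadget and is the same (or controllably related) across all vertices. Then a weak edge $uv \in E_w$ should be translated into a configuration forcing $|e_u \cap e_v| = 0$ among the relevant coordinates — i.e.\ $u$ and $v$ are \emph{non-adjacent} in $G'$ but linked through the gadget so their labels cannot share a coordinate — and a strong edge $uv \in E_s$ should be translated into $u$ and $v$ being adjacent in $G'$, so that $|e_u \cap e_v| = 2$, which (after removing the common pinned coordinate) forces exactly one shared element among $\{a_u,b_u\}$ and $\{a_v,b_v\}$. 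Condition $(i)$ of the $2$-labelling (distinct vertices get distinct pairs) is automatically enforced because $H$ has no parallel hyperedges. One must also make sure that the auxiliary pinned coordinates do not accidentally create unwanted intersections; this is handled by giving each gadget its own private ``padding'' elements and arguing, via Propositions~\ref{posneg} and \ref{intcliq} together with the Facts on diamonds, butterflies and $K_4+v$, that any realization of $G'$ must have the intended local structure.

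The main obstacle, and the step I expect to require the most care, is simultaneously achieving three things: (a) keeping $G'$ \emph{claw-free} — every vertex used as a connector must have its neighbourhood covered by at most two cliques, so the gadgets linking a $2LI$-vertex to its many incident edges have to be designed so the connector vertex never sees an independent set of size three; (b) ensuring the reduction is \emph{faithful in both directions} — not only that a $2$-labelling yields a hypergraph preimage, but that \emph{every} preimage of $G'$ restricts to a valid $2$-labelling of $G$, which is where the structural lemmas (Corollaries~\ref{twomax}, \ref{diam}, \ref{W4}, Fact~\ref{K4+v}) do the work of eliminating ``cheating'' realizations; and (c) keeping the size of $G'$ polynomial in $|G|$. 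I would first design and verify the single-vertex gadget and its forced labelling, then the weak-edge and strong-edge connectors in isolation, then prove a local-rigidity lemma stating that in any $L_3^2$-realization of $G'$ each gadget behaves as intended, and finally assemble these into the equivalence ``$G'\in L_3^2 \iff G$ has a $2$-labelling'', concluding $NP$-completeness by composing with Theorem~\ref{2label}.
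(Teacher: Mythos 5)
Your high-level strategy --- reduce from $2LI$, whose $NP$-completeness you take from Theorem~\ref{2label}, with membership in $NP$ certified by the preimage hypergraph --- is the same as the paper's. But the heart of the proof is the gadget that turns a $2LI$-vertex into a piece of a claw-free graph, and this is exactly the part you leave unresolved; moreover, the specific encoding you sketch does not work. You propose to represent each $2LI$-vertex $v$ by a \emph{single} vertex of $G'$ whose hyperedge label has two ``free'' coordinates $\{a_v,b_v\}$ and one pinned coordinate. If the pinned coordinate is shared globally, then for any two $2LI$-vertices $u,v$ that are \emph{non-adjacent} in the $2LI$ instance, non-adjacency of $e_u,e_v$ in $G'$ forces $\vert e_u\cap e_v\vert\ne 2$, hence $\{a_u,b_u\}\cap\{a_v,b_v\}=\emptyset$ --- a constraint that $2LI$ does not impose on non-edges, so the reduction is not faithful. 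If instead each vertex gets a private pin, a strong edge requires $\vert e_u\cap e_v\vert=2$, i.e.\ $\{a_u,b_u\}=\{a_v,b_v\}$, contradicting condition $(i)$. You correctly flag claw-freeness of high-degree connectors, bidirectional faithfulness, and local rigidity as the delicate steps, but those are the entire content of the theorem, and diamonds, butterflies and $K_4+v$ do not obviously assemble into a gadget meeting all three requirements at once.

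The paper's resolution is to blow each $2LI$-vertex up into a clique $C_i$ of size at least five. Since every realization of a big clique is positive, its vertices must be labelled $\{i,i',k\}$, so the clique canonically carries the pair $\{i,i'\}$ --- this is the rigidity you were missing, obtained for free from the classification of big cliques rather than from diamond or butterfly arguments. A strong edge of the $2LI$ instance becomes a $C_4$ spanning two vertices of each of the two cliques (forcing $\vert\{i,i'\}\cap\{j,j'\}\vert=1$), a weak edge becomes a $K_4$ spanning two vertices of each clique (forcing the pairs disjoint), and distinct links use pairwise disjoint sets of connector vertices, which keeps $G'$ claw-free no matter how large the degree of the $2LI$-vertex is and keeps the construction polynomial. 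Unlinked cliques impose only $\vert\{i,i'\}\cap\{j,j'\}\vert\le 1$, which is precisely condition $(i)$ of $2LI$, so the non-edge problem that breaks your single-vertex encoding simply does not arise. To complete your proof you would need to either adopt this clique-per-vertex encoding or design and verify a genuinely different gadget resolving the pinned-coordinate dilemma above.
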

\begin{proof}
The proof has two parts. Firstly, we define $\cal C$ a subclass of claw-free graphs we are interested here. Then we show that the problem of deciding wether $G\in \cal C$ is such that $G\in L_3^2$ is equivalent to the problem $2LI$ which is $NP$-complete from Theorem  \ref{2label}.

First: the definition of $\cal C$. A graph $G\in \cal C$ consists of components $C_1,\ldots,C_k$ each of the $C_i$'s being a clique of size at least five, the $C_i$'s form a partition of the vertex set of $G$. When two components $C_i,C_j$ are linked there are connected by either a {\it strong link} or a {\it weak link}. A strong link consists of a $C_4$ of $G$ with its two non adjacent vertices $i_1,i_2\in C_i$ and  its two other non adjacent vertices $j_1,j_2\in C_j$. A weak link consist of a $K_4$ of $G$ with two vertices $i_1,i_2\in C_i$ and the two other vertices $j_1,j_2\in C_j$. The links, weak or strong, have no common vertices. It follows that $G$ is claw-free. Moreover, since $\vert C_i\vert \ge 5$, $C_i\cup C_j$ the union of two distinct components cannot be a clique.

When $G\in L_3^2$ the components satisfy the following: Since each component $C_i=\{v_1^i,v_2^i,\ldots,v_p^i\}$ has at least five vertices, we necessarily have $v_1^i=\{i,i',1\},v_2^i=\{i,i',2\},\ldots,v_p^i=\{i,i',p\}$ and $C_i$ is associated with its pair of common labels $\{i,i'\}$. For two distinct components $C_i,C_j$ we have $\vert \{i,i'\}\cap \{j,j'\}\vert\le1$. When $C_i,C_j$ are connected with a strong link then we have $\vert \{i,i'\}\cap \{j,j'\}\vert=1$. When $C_i,C_j$ are connected with a weak link then we have $\{i,i'\}\cap \{j,j'\}=\emptyset$.

Second: equivalence with the problem $2LI$. Given $G\in \cal C$ we define the graph $G'$ as follows: to the vertices $v_i$ of $G'$ correspond the components $C_i$ of $G$, and vice versa; to a strong (resp. weak) link of $G$ corresponds a strong (resp. weak) edge of $G'$, and vice versa.

We assume that there exists a $3$-uniform hypergraph $H$ such that $G=L_3^2(H)$. Since $\vert C_i\vert \ge 5$ the intersection of the labels of the pairs of vertices in the same component $C_i$ is the same two labels says $\{i,i'\}$. Now, for two distinct $C_i,C_j$, since $C_i\cup C_j$ is not a clique we have that $\vert \{i,i'\}\cap\{j,j'\}\vert\le 1$. When $C_i,C_j$ are strongly connected then $\vert \{i,i'\}\cap\{j,j'\}\vert= 1$, when they are weakly connected then $\vert \{i,i'\}\cap\{j,j'\}\vert= 0$. Thus for each vertex $v_i$ of $G'$ when assigning the two labels $i,i$ to $v_i$ we obtain a positive answer for the problem $2LI$.

Now, we assume that the problem $2LI$ has a positive answer. Let $i,i'$ be the two labels assigned to $v_i$ in $G'$. We assign $\{i,i'\}$ to the component $C_i$ of $G$. Let $v_i,v_j$ be two vertices linked with a strong edge. Then the labels associated to $C_i, C_j$ are $\{i,i'\},\{i,j\}$, respectively. Let $w_1^i, w_2^i$ and $w_1^j, w_2^j$ be respectively the two vertices of the strong link between $C_i$ and $C_j$. Then $w_1^i=\{i,i',a\},w_2^i=\{i,i',b\},w_1^j=\{i,j,a\},w_2^j=\{i,j,b\}$. Let $v_i,v_j$ be two vertices linked with a weak edge. Then the labels associated to $C_i, C_j$ are $\{i,i'\},\{j,j'\}$, respectively.  Let $w_1^i, w_2^i$ and $w_1^j, w_2^j$ be respectively the two vertices of the weak link between $C_i$ and $C_j$. Then $w_1^i=\{i,i',j\},w_2^i=\{i,i',j'\},w_1^j=\{j,j',i\},w_2^j=\{j,j',i'\}$. Thereafter, when a vertex $w_k^i\in C_i$ is not contained in a link, weak or strong, we take $w_k^i=\{i,i',k\}$. Thus there exists $H$ a $3$-uniform hypergraph $H$ such that $G=L_3^2(H)$.
\end{proof}

\subsection{Recognition for triangulated claw-free graphs in $L_3^2$}

Recall that a graph $G$ is {\it triangulated} (or {\it chordal}) if it is $C_k$-free, $k\ge 4$.

It is known that to each triangulated graph $G=(V,E)$, we can associate, in linear time \cite{maxcliq}, a maximal clique tree $T=(C,S)$ where each maximal clique of $G$ corresponds to a vertex $c\in C$ and $cc'\in S $ if $c\cap c'\ne \emptyset$ and $c\cap c'$ is a minimal separator of $G$ which is a clique.
An example is shown in Figure~\ref{fig:clique_tree}

\begin{figure}[!ht]
    \centering
    \includegraphics[width = \linewidth]{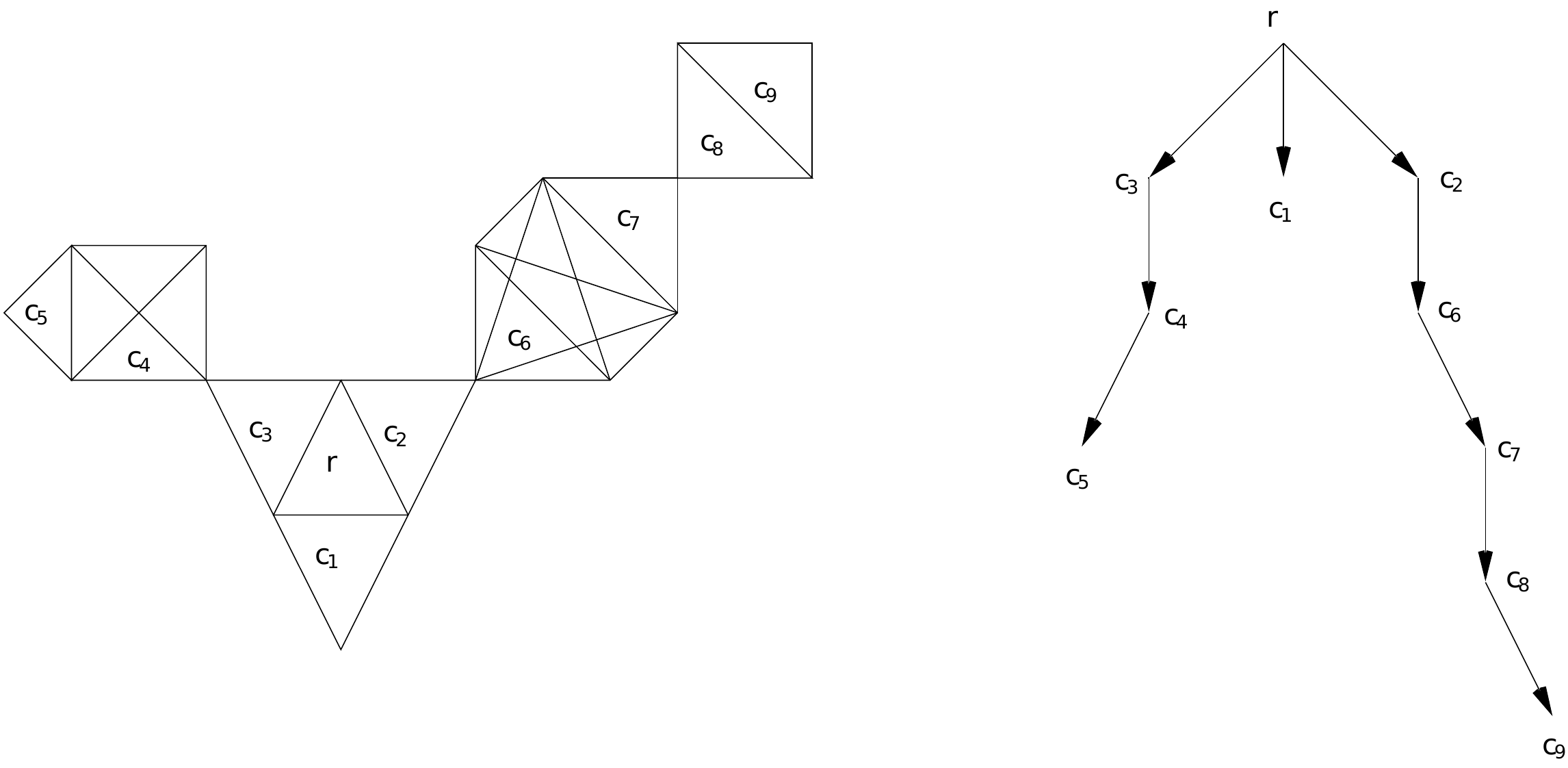}
    \caption{Example of a claw-free triangulated graph and its associated directed tree. We consider the clique $r$ as the root of the tree. The three leaves are highlighted.}
    \label{fig:clique_tree}
\end{figure}

From Property \ref{intcliq} if $G\in L_3^2$, then an edge $cc'\in S$ corresponds either to a strong intersection when $\vert c\cap c'\vert =2$ or to a weak intersection when $\vert c\cap c'\vert =1$. 
Moreover, when $cc'\in S,\vert c\vert,\vert c'\vert\ge 3$ we can easily obtain the following:

\begin{itemize}
\item  $c$ and $c'$ weakly intersect: let $c\cap c'=\{a\}$; there exists $\{u,v\}\in c, \{u',v'\}\in c'$ such that $G[\{a,u,v,u',v'\}]$ is a butterfly, otherwise $a$ cannot be a separator of $G$;
\item  $c$ and $c'$ strongly intersect: let $c\cap c'=\{a,b\}$; there exists $u\in c, u',\in c'$ such that $G[\{a,b,u,v\}]$ is a diamond, otherwise $\{a,b\}$ cannot be a separator of $G$.
\end{itemize}

In the sequel, for the sake of clarity, we denote the cliques as $K_i$ but, when not indicated, they do not represent cliques of size $i$. 
The following fact holds.

\begin{fact}\label{intclique}
Let $G=(V,E)$ be a triangulated claw-free graph and $K_t$ be a clique of $G$ such that $t\ge 4$. If $K_i,K_j$ are two (distinct) cliques that strongly intersect $K_t$ then $K_i\cap K_j\cap K_t=\emptyset$.
\end{fact}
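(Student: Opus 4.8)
The plan is to argue by contradiction using the labelling machinery developed in the preceding properties. Suppose $K_i$ and $K_j$ both strongly intersect $K_t$, and that some vertex $w$ lies in $K_i\cap K_j\cap K_t$. Since $|K_t|=t\ge 4$, Proposition~\ref{intcliq} forces $K_t$ to be positive, so its vertices all share a common pair, say every vertex of $K_t$ has label $\{1,2,x\}$; in particular $w=\{1,2,x_0\}$ for some $x_0$. Because $K_i$ strongly intersects $K_t$, we have $|K_i\cap K_t|=2$, so $K_i$ contains $w$ together with a second vertex of $K_t$, say $w'=\{1,2,x_1\}$ with $x_1\ne x_0$; thus $K_i$ contains two vertices whose shared pair is $\{1,2\}$. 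By Proposition~\ref{posneg}, since $K_t^+$ and they strongly intersect, $K_i$ must be negative. The same reasoning applied to $K_j$ shows $K_j$ contains $w$ and a second vertex $w''=\{1,2,x_2\}$ of $K_t$ (with $x_2\ne x_0$), and $K_j^-$ as well.

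Next I would extract the contradiction from having the single vertex $w=\{1,2,x_0\}$ sit inside two distinct negative cliques $K_i^-$ and $K_j^-$ that both also meet $K_t$ in the pair $\{1,2\}$. A negative clique containing the vertex $\{1,2,x_0\}$ together with another vertex $\{1,2,x_1\}$ must, by the definition of negative $K_3$/$K_4$ cliques given before Proposition~\ref{posneg}, look (up to relabelling) like $\{1,2,x_0\},\{1,2,x_1\},\{1,x_0,x_1\}$, possibly with a fourth vertex $\{2,x_0,x_1\}$. So inside $K_i$ there is a vertex $s$ whose label is $\{1,x_0,x_1\}$ (or the symmetric $\{2,x_0,x_1\}$); and inside $K_j$ there is a vertex $s'$ with label $\{1,x_0,x_2\}$ (or $\{2,x_0,x_2\}$) where $x_2$ is the ``extra'' label index produced by $w''$. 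The point is that $s$ and $s'$ each share exactly the element $x_0$ with $w$ beyond the pair $\{1,2\}$, and one checks that $s$ and $s'$ then intersect each other in exactly two elements (either $\{1,x_0\}$ or $\{2,x_0\}$, after matching up the symmetric choices), which would force $s s'\in E$ and place $s,s'$ in a common clique with $w$; but $s\in K_i\setminus K_j$ and $s'\in K_j\setminus K_i$ and $K_i\ne K_j$, and combined with $w,w',w''\in K_t$ this produces either a claw, a $K_5-e$, or a configuration violating Corollary~\ref{twomax} (an edge lying in three cliques) — any of which is forbidden. I would phrase the final clash in whichever of these terms is cleanest; the claw obstruction (triangulated claw-free hypothesis) is the most robust since it is one of the standing assumptions.

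The main obstacle will be bookkeeping the small case analysis cleanly: negative $K_3$ versus negative $K_4$, and the symmetric ``$1$ vs $2$'' choice of the odd vertex in each of $K_i$ and $K_j$, give a handful of subcases, and in each one must verify that the resulting two off-$K_t$ vertices force a forbidden induced subgraph rather than a harmless one. Two simplifications keep this manageable. First, by Corollary~\ref{twomax} every edge lies in at most two maximal cliques, so the edge $ww'$ already lies in $K_t$ and $K_i$ and cannot lie in $K_j$; hence $w'\notin K_j$ and symmetrically $w''\notin K_i$, so $x_1\ne x_2$ and the three vertices $w,w',w''$ are genuinely distinct in $K_t$ — this immediately rules out $K_i=K_j$ and pins down that $K_i,K_j$ attach to $K_t$ along different vertices. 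Second, once we know $s$ shares two elements with some vertex of $K_t$ other than along $\{1,2\}$ — impossible, since a second vertex of $K_t$ strongly meeting $K_i$ would make $K_i$ meet $K_t$ in $3$ elements, contradicting Proposition~\ref{intcliq} — we learn $s$ meets $K_t$ in at most one element, which sharply constrains $s$'s label and lets us enumerate. Working the subcases under these two reductions, the contradiction falls out uniformly, and I would present it as: ``assume $w\in K_i\cap K_j\cap K_t$; by Propositions~\ref{posneg} and~\ref{intcliq}, $K_t^+$ while $K_i^-,K_j^-$; chase the labels of the odd vertices $s\in K_i$, $s'\in K_j$ to find $ss'\in E$; then $G[\{s,s',w,\,\text{a fourth vertex of }K_t\}]$ or a nearby set is a forbidden subgraph — contradiction.''
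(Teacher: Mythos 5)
Your approach has a fundamental mismatch with the statement: the Fact is a purely structural assertion about \emph{any} triangulated claw-free graph, with no hypothesis that $G\in L_3^2$, yet your entire argument presupposes a $\lambda_3^2$-labelling (you invoke Proposition~\ref{posneg}, Proposition~\ref{intcliq} and the positive/negative classification, all of which only make sense once a realization is assumed). At best you would prove the Fact under the extra assumption $G\in L_3^2$, which is a strictly weaker statement and would not serve its role in the paper, where it is used to constrain the clique tree of a candidate graph \emph{before} membership in $L_3^2$ is decided. The paper's own proof is a short label-free argument: take $v\in K_i\cap K_j\cap K_t$, write $K_i\cap K_t=\{v,v_i\}$, $K_j\cap K_t=\{v,v_j\}$, pick a fourth vertex $k\in K_t$ and vertices $w_i\in K_i\setminus(K_j\cup K_t)$, $w_j\in K_j\setminus(K_i\cup K_t)$; claw-freeness at $v$ (with leaves $k,w_i,w_j$) forces $w_iw_j\in E$ and similarly $v_iv_j\in E$, and then $\{v_i,w_i,w_j,v_j\}$ induces a $C_4$, contradicting chordality. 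Note this is exactly where the hypothesis $t\ge4$ enters (it supplies $k$), whereas in your proof $t\ge4$ plays no clear role.

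Even inside your own framework there are gaps. First, Proposition~\ref{intcliq} does not ``force $K_t$ to be positive'': it only bounds intersections of cliques by two vertices, and a $K_4$ can perfectly well be negative, so the case $t=4$ with $K_t^-$ and $K_i^+,K_j^+$ is not addressed. Second, the key step ``one checks that $s$ and $s'$ intersect in exactly two elements'' is false in general: with $w=\{1,2,x_0\}$, the odd vertex of $K_i$ may be $\{1,x_0,x_1\}$ while that of $K_j$ is $\{2,x_0,x_2\}$, giving $|s\cap s'|=1$ and no edge $ss'$; the symmetric choices need not ``match up,'' so this branch needs a separate argument (it does yield a claw on $\{w,k,s,s'\}$, but you never carry that out). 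Finally, the conclusion is left as a disjunction of possible contradictions to be chosen later, which is not a completed proof. I would recommend abandoning the labelling route entirely and arguing directly from claw-freeness and chordality as above.
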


\begin{proof}
For contradiction, we assume $v\in K_i\cap K_j\cap K_t$. Let $K_i\cap K_t=\{v,v_i\},K_j\cap K_t=\{v,v_j\},v_i\ne v_j$. Since $t\ge 4$ there exists $k\in K_t\setminus \{v,v_i,v_j\}$.  Let $w_i\in K_i\setminus \{K_j\cup K_t$\} and $w_j\in K_j \setminus \{K_i\cup K_t\}$. We have $w_iw_j\in E$, otherwise $G[\{v,k,w_i,w_j\}]$ is a claw. We also have $v_i v_j \in E$ for a similar reason. But then  $G[\{v_i,w_i,w_j,v_j\}]=C_4$ which is not possible since $G$ is triangulated.
\end{proof}

Figure \ref{Fact56} shows the situation described  in Fact \ref{intclique}.

The following theorem states the polynomiality of the reconstruction of a $3$-uniform hypergraph from a triangulated claw-free graphs $G$ such that $G=L_3^2(H)$. To help the reader, the proof is divided into small steps that lead to the final result.

\begin{figure}[H]
   \begin{center}
   \includegraphics[width=12cm, height=6cm, keepaspectratio=true]{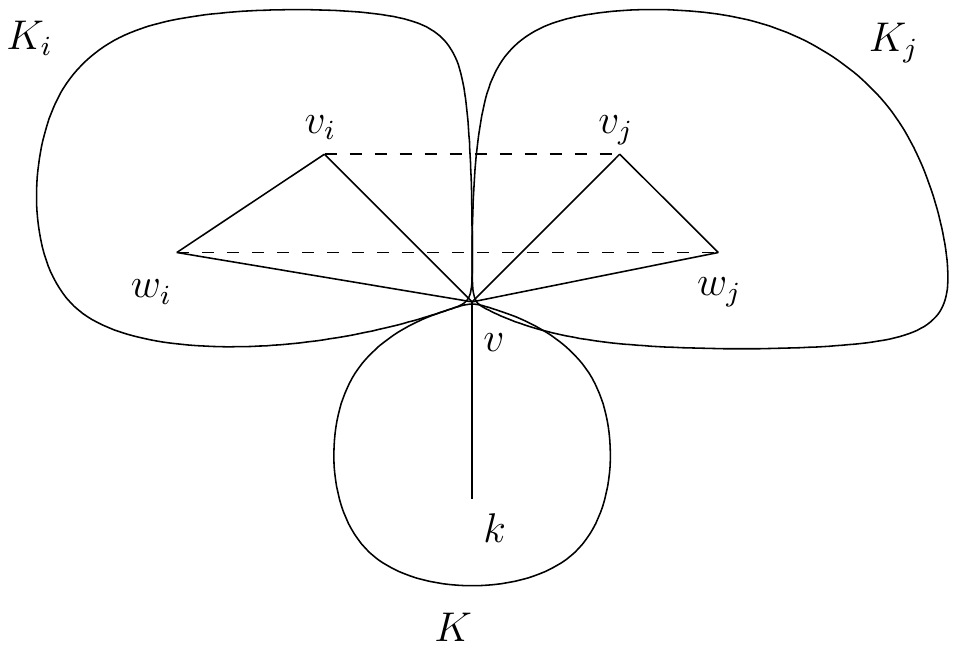}
   \end{center}
   \caption{Visual example of the proof of Fact \ref{intclique}. Curves represents nodes inside the same clique. Dashed edges are forced by claw-free property.}
   \label{Fact56}
\end{figure}

\begin{theorem}\label{triang}
Let $G=(V,E)$ be a triangulated claw-free graph. Deciding whether a $3$-uniform hypergraph $H$ exists such that $G=L_3^2(H)$ and constructing $H$ when it exists can be done in  polynomial time.
\end{theorem}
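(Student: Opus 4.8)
The plan is to work on the maximal clique tree $T=(C,S)$ of the triangulated graph $G$, rooting it at an arbitrary clique $r$, and to process the cliques in a top-down (BFS/DFS) order, committing to a $\lambda_3^2$-labelling of the vertices of each clique as we go. The key structural observations are already in place: by Proposition~\ref{intcliq} two adjacent cliques in $T$ intersect in one or two vertices; by Corollary~\ref{twomax} every edge lies in at most two cliques; by Proposition~\ref{posneg} a strong intersection forces the two cliques to have opposite sign; and by Fact~\ref{intclique} a big clique $K_t$ ($t\ge4$) cannot have two children sharing a vertex with it and with each other. The "positive/negative" dichotomy for small cliques, together with the fact that every big clique is positive, means the only real freedom is how to distribute the "shared pair" labels of one clique among the edges joining it to its children, and which sign to assign to the medium cliques.

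\medskip
First I would set up the labelling of the root clique $r$: if $r$ is big assign it an arbitrary sharing pair $\{1,2\}$ and distinct third coordinates to its vertices; if $r$ is medium, the sign of $r$ is to be decided (possibly constrained later, so in the chordal case I would argue one can defer it or try both of the $O(1)$ choices). Then, processing an edge $cc'\in S$ from an already-labelled parent $c$ to a child $c'$: the separator $c\cap c'$ is either a single vertex $a$ (weak intersection) or a pair $\{a,b\}$ (strong intersection). In the weak case, the labels of $a$ (and the butterfly witness described before Fact~\ref{intclique}) determine which coordinate of $a$ is "shared inside $c$"; the other two coordinates of $a$'s label, together with the freedom to introduce a fresh element, give the sharing pair of $c'$, and Proposition~\ref{butter} guarantees this is consistent. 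In the strong case the separator $\{a,b\}$ together with the diamond witness forces, via Corollary~\ref{diam}, that $c'$ has sign opposite to the local sign of $c$ at that diamond, and the sharing pair of $c'$ is determined up to the introduction of one fresh element; a medium $c'$ that is simultaneously strongly attached to several parents/children must have all these sign constraints agree, and Fact~\ref{sun}, Fact~\ref{prism}, Corollary~\ref{W4} and Corollary~\ref{W5} are exactly the tools that show these local constraints are either automatically compatible or else certify a "no" instance. I would then argue that distinct vertices always receive distinct labels because fresh elements are used for every previously-unseen third coordinate and every new sharing pair, so condition $i)$ of a $\lambda_3^2$-labelling is maintained globally; and that no spurious edge is created (a pair of vertices in different cliques sharing two coordinates would, by Proposition~\ref{intcliq}/Corollary~\ref{twomax}, have to lie in a common clique, contradicting tree structure), so $G=L_3^2(H)$ exactly.

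\medskip
For the algorithm to run in polynomial time, I would note that the clique tree has $O(n)$ nodes and $O(n)$ edges and is computable in linear time \cite{maxcliq}; at each node we do $O(1)$ casework (weak vs.\ strong, positive vs.\ negative, plus checking the butterfly/diamond/sun/prism/wheel conditions among the $O(\Delta)$ incident cliques), so the whole procedure is polynomial, and it either outputs $H$ or reports that one of the forbidden configurations obstructs the labelling.

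\medskip
The main obstacle I expect is the bookkeeping around medium cliques that touch several neighbours strongly at once: a $K_3$ or $K_4$ can be the "center" shared by many diamonds, and the sign of such a clique is over-constrained. Showing that the local rules (opposite signs across each strong separator, Fact~\ref{intclique} forbidding two strong children meeting the clique in overlapping pairs, and Facts~\ref{sun}--\ref{prism}) are jointly sufficient — i.e.\ that whenever no listed forbidden induced subgraph appears the greedy top-down assignment never gets stuck — is the heart of the argument and will require a careful case analysis of how many and which pairs of a medium clique can serve as separators toward its tree-neighbours. The triangulated hypothesis is what keeps this finite: it rules out $C_4$ (used already in Fact~\ref{intclique}) and more generally prevents "cyclic" chains of strong links that would force a $2$-coloring-style contradiction without a local witness, so that all obstructions are detectable locally on the clique tree.
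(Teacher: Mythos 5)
Your overall strategy --- root the maximal clique tree, assign a positive/negative sign to each maximal clique using the strong/weak intersection constraints, then realize the signs as explicit $3$-set labels --- is the same as the paper's, but as written it has two concrete gaps. First, you process the tree in a single top-down pass, committing to the sign of each medium clique as you reach it, and you propose to ``defer it or try both'' only at the root. This fails as stated: the sign of a medium clique can be forced by configurations lying strictly below it in the rooted tree (a big clique further down the branch, a sun whose central triangle is a descendant, a $K_4$ weakly met by a descendant as in Fact~\ref{K4+v}), so an arbitrary early commitment can be contradicted only after it has already been propagated, and trying both signs at every unconstrained medium clique is exponential. The paper avoids this by first installing all locally forced signs in a preprocessing pass (big cliques, Fact~\ref{K4+v}, Fact~\ref{sun}), then propagating \emph{bottom-up} from the leaves so that every constraint coming from a clique's subtree is known before the clique is signed, checking consistency against Proposition~\ref{butter}, and only then constructing the labels top-down. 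Second, you never treat maximal cliques of size two. The paper first splits $G$ along cut edges (in a chordal graph an edge lying in no triangle is a cut edge) and proves explicit gluing lemmas for reattaching the realizations of the pieces; after that reduction every remaining clique is medium or big, which is precisely what makes the butterfly and diamond witnesses you invoke actually exist. Without that reduction your weak/strong case analysis does not apply to $K_2$ cliques.

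Beyond these two points, the step you yourself flag as ``the heart of the argument'' --- that the local rules are jointly sufficient, i.e.\ that the assignment never gets stuck when no forbidden configuration occurs, and that the resulting vertex labels are globally injective and create no spurious adjacencies --- is exactly the content of the paper's two labelling algorithms together with its final construction fact, and it is left entirely unproved in your proposal. What you have is a correct plan of essentially the paper's argument, but with the decisive case analysis missing and with a propagation order that must be reversed (or replaced by a genuine two-pass constraint propagation) before it can be made correct.
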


\begin{proof}
Let $G=(V,E)$ be a triangulated claw-free graph. We can assume that $G$ is connected and $\vert V\vert\ge 3$. We first consider two cases.

\vspace{1mm}
{\it G has an edge cut} 
\vspace{1mm}

Let $e=v_1v_2\in E$ be an edge-cut of $G$. We call ${\widetilde G_1},{\widetilde G_2}$ the two components of ${\widetilde G}=G-e$, with $v_1 \in {\widetilde G_1}, v_2 \in {\widetilde G_2}$. 

We will denote $G_1 = {\widetilde G_1}+v_2$ and $G_2 = {\widetilde G_2}+v_1$. 

Since $G$ is claw-free, $v_1$ is a vertex of at most two cliques: the clique $G[\{v_1,v_2\}]$ and one clique $K_n ^1$ of ${\widetilde G_1}$. The same holds for $v_2$, which can belong respectively to one clique $K_m ^2$ of ${\widetilde G_2}$ or to $G[\{v_1,v_2\}]$.  

First, we suppose that $v_1$ is a leaf of $G$ (i.e. $deg_G (v_1) = 1$). In such a case, obviously $G=G_2$. 

\begin{fact}
$G$ has a realization if and only if ${\widetilde G_2}$ has a realization such that $K_m ^2$ is positive when $m =4$.
\end{fact}

\begin{proof}
 From Fact \ref{K4+v} if  $G$ has a realization then $K_m ^2$ is positive when $m =4$. 
 
 Conversely, assume that ${\widetilde G_2}$ such that $K_m ^2$ is positive when $ m =4$. The following cases arise:
 
 \begin{itemize}
 
     \item if $m=4$ suppose that $K_m ^2=\{v_2,v_3,v_4,v_5\}$ with $v_2=\{1,2,3\},v_3=\{1,2,4\},v_4=\{1,2,5\},v_5=\{1,2,6\}$. Then we can take $v_1=\{1,3,0\}$;
     
     \item if $m \ge 5$ then $K_m ^2$ is positive and we do as before.
     
     \item if $m=3$ we have two further cases. 
     
     If $K_m ^2$ is positive, we do as above. If instead it is negative, let $K_m ^2=\{v_2,v_3,v_4\}$ with $v_2=\{1,2,3\},v_3=\{1,2,4\},v_4=\{1,3,4\}$. Then we take $v_1=\{2,3,0\}$;
     
     \item Finally $K_m ^2=\{v_2,v_3\}$ with $v_2=\{1,2,3\},v_3=\{1,2,4\}$. We take $v_1=\{1,3,0\}$.
     
 \end{itemize}
 
\end{proof}

We now suppose both $v_1$ and $v_2$ are not leaves. 


\begin{fact}
$G$ has a realization if and only if $G_1$ and $G_2$ have a realization.
\end{fact}
\begin{proof}
Obviously, if $G$ has a realization also $G_1$ and $G_2$ have it.
Therefore we focus on sufficiency. 

The following cases arise:

\begin{itemize}
    \item suppose that $K_n ^1$ and $K_m ^2$ are positive and that the vertices of $K_n ^1$ have the labels $\{1,2,a_k\},1\le k\le n$ with $v_1=\{1,2,a_1\}$, while the vertices of $K_m ^2$ have the labels $\{2,3,b_k\},1\le k\le m$  with $v_2=\{2,3,b_1\}$.  Taking $a_1=b_1$ and $a_i \neq b_j$ with $1<i\leq n$ and $1<j\leq m$, we obtain a realization for $G$;

    \item suppose $K_n ^1$ is positive and $K_m ^2$ is negative. Suppose also that $K_n ^1$ vertices have labels $\{1,2,a_k\},1\le k\le n$ with $v_1=\{1,2,a_1\}$. 
    
    From Fact \ref{K4+v} we have $m =3$. W.l.o.g., the vertices of $K_m ^2$ have the labels $\{2,3,4\},\{2,3,5\},\{3,4,5\}$ with $v_2=\{2,3,4\}$. Taking $a_1=4$  we obtain a realization for $G$; 
    
    \item suppose $K_n ^1$ is positive and $m =2$. Suppose also that the vertices of $K_n ^1$ have labels $\{1,2,a_k\},1\le k\le n$ with $v_1=\{1,2,3\}$. W.l.o.g., for the realization of $G_1$ we have $v_2=\{1,3,4\}$, then up to a renaming of the labels for the realization of $G'_2$  we have a realization for $G$; 
    
    \item suppose $K_n ^1$ and $K_m ^2$ are negative. From Fact \ref{K4+v} we have $n= m =3$. For the realization of $G_1$ let $v_1=\{1,2,3\},v_2=\{2,3,5\}$ and let $\{1,2,4\},\{1,3,4\}$ be the labels of the two other vertices of $K_n ^1$. Then up to a renaming of the labels for the realization of $G'_2$ with $\{3,5,6\},\{2,5,6\}$ the labels of the two other vertices of $K_m ^2$, we have a realization for $G$; 
    \item Suppose $n = m =2$.  For the realization of $G_1$ let $v_1=\{1,2,3\},v_2=\{1,3,5\}$ and let $\{1,2,4\}$ be the label of the other vertex of $K_n ^1$. Up to a renaming of the labels for the realization of $G'_2$ we have a realization for $G$.
    
\end{itemize}

\end{proof}

\vspace{1mm}
{\it G has no edge-cut:}
\vspace{1mm}

If $G$ has no edge cut, then the following Fact is readily obtained.

\begin{fact}
$G$ has no small cliques.
\end{fact}

 \begin{proof}
For contradiction we assume that $K=G[\{v_iv_j\}]$ is a clique with $v_i\in K_i,v_j\in K_j$, where $K_i\ne K,K_j\ne K,$ are two distinct cliques. Since $v_iv_j$ is not a cut-edge there exists a path $v_i-v_k-\cdots-v_j$ where  $v_k\ne v_j$. Assume that it is one of the shortest paths. Then $v_i-v_k-\cdots-v_j-v_i$ is an induced cycle of length greater than three, a contradiction.
\end{proof}

Therefore, we suppose without loss of generality that $G$ contains only medium or big cliques.

Using the algorithm given in \cite{maxcliq} we obtain $T=(C,S)$, a maximal clique tree of $G$ in time $O(n)$ where a vertex $c\in C$ corresponds to a big or a medium clique of $G$. We show how to build a labelling of it. 

First, we need the following fact.

\begin{fact}
    Let $c\in C$ with $\vert c\vert=3$. Then, $c$ has at most $3$ neighbours in $T$.
\end{fact}

\begin{proof}
Suppose that $c$ has more than $3$ neighbours. Then, considering our previous results, only two cases are possible: at least two cliques weakly intersect $c$ in the same node $v$ or one strongly intersects $c$ in $\{v,w\}$ and the other weakly intersects $c$, without loss of generality, in $\{v\}$.
We consider these cases separately:

\begin{itemize}

    \item  assume that $c$ has two neighbors $c_1,c_2$ that weakly intersect it in $v\in c$. Let $w\in c,w\ne v$. Since the cliques are distinct and of size greater than one, there must exist $v_i\in c_i,v_i\ne v,i\in \{1,2\}$. Remember that, if $c_1,c_2$ are neighbours of $c$, it means that $\{v\}$ is a separator in $G$. Therefore, $wv_1,wv_2,v_1v_2\not\in E$, but $G[\{v,w,v_1,v_2\}]=K_{1,3}$, a contradiction;
    \item we assume that $c$ has two neighbors $c_1,c_2$ such that $c_1 \cap c = \{v,v'\}$ and $c_2 \cap c = \{ v \}$. Let $w\in c,w\ne v,v'$. 
    Since $\{v,v'\}$ and $\{v\}$ are two separators $wv_1,wv_2,v_1v_2\not\in E$, but $G[\{v,w,v_1,v_2\}]=K_{1,3}$, a contradiction. 
\end{itemize}

Thus $c$ has at most three neighbours. 
\end{proof}
 
Note that for the case where $c$ has three neighbours, either $c$ is the central triangle of sun or it weakly intersects three cliques $c_1,c_2,c_3$ in each of its vertices.

Returning to the main problem, we will associate a label, $\lambda(c)\in\{+,-\}$ to each vertex $c$ of $T$. 
The idea is that the label associated with a clique represents its negativity or positivity. 

In the first stage, partial labelling is obtained with the Preprocessing Labelling algorithm. In this step only the {\it local} forcings based on big cliques, Facts \ref{K4+v} and \ref{sun} are taken into account. The last instruction  of the algorithm consists of checking the Fact \ref{butter}.

\begin{algorithm}\label{preproc}
\caption{Preprocessing Labelling}
\begin{algorithmic}
\FORALL{$c\in T$} 
 \IF{ $\vert c\vert\ge 5$}\STATE $\lambda(c)=+$ \COMMENT{a big clique is positive} \ENDIF 
 \IF{$\vert c\vert=4$ and there exists $c'$ that weakly intersects $c$ }\STATE $\lambda(c)=+$ \COMMENT{by Fact \ref{K4+v}}\ENDIF 
\IF{$\vert c\vert=3$ and there are $c_1,c_2,c_3$ such that $\vert c_1\vert=\vert c_2\vert=\vert c_3\vert=3$ and $c_1,c_2,c_3$ strongly intersects $c$ in different arcs}
\STATE $\lambda(c)=+$\COMMENT{by Fact \ref{sun} noticing that $c,c_1,c_2,c_3$ induce a sun}
\FOR{i=1\TO 3 }\STATE {$\lambda(c_i)=-$} \ENDFOR
\ENDIF 
 \ENDFOR
 \FORALL{$cc'\in S$}
  \IF{$\lambda(c)=\lambda(c')=-$}\RETURN {$G\not\in L_3^2$} \ENDIF \COMMENT{by Fact \ref{butter}}
 \ENDFOR
\end{algorithmic}
\end{algorithm}

Then, in the second stage, the algorithm Labelling propagates, from the bottom to the top, the labelling in  the neighbour of the cliques already labelled. The last instruction  of the algorithm consists of checking the Fact \ref{butter}.
Note that in the algorithm we are not considering the case in which a clique does not have a label, although it's possible that happen.

\begin{algorithm}
\caption{Labelling}
\begin{algorithmic}
\STATE choose $c_r\in C$ as the root of the directed tree $T_r$
\STATE  \COMMENT{for each $c\ne c_r$ the unique path $c_r-\cdots -c$ is a directed path starting from $c_r$}
\FORALL{$c\in T_r$ \AND $\lambda(c)=\emptyset$, from the leaves to $c_r$} 
 \STATE Let $c'$ be the predecessor of $c$ in $T_r$ 
 \IF{$c$ is a leaf}

 \IF{$c$ and $c'$ strongly intersect} 
 \IF {$\lambda(c')=+$}
 \STATE $\lambda(c)=-$\ENDIF 
  
  \IF {$\lambda(c')=-$}\STATE $\lambda(c)=+$\ENDIF
  \ENDIF 
   \IF{$c$ and $c'$ weakly intersect} \IF {$\lambda(c')=-$}\STATE $\lambda(c)=+$\COMMENT{in this case $|c'| = 3$ must hold}
 \ENDIF 
   \IF {$\lambda(c')=+$}\STATE $\lambda(c)=-$\ENDIF
  \ENDIF 
  \ENDIF 
  
  \IF{$c$ is not a leaf}
   \IF{($c$ has two successors $c_1,c_2$ that are strongly connected with $c$ \AND $\lambda(c_1)\ne\lambda(c_2)$) \OR ($c$ has a successor $c_1$ that is strongly connected with $c$, $\lambda(c_1)=+$ \AND $c$ has a successor $c_2$ that is weakly connected with $c$, $\lambda(c_2)=-$ )}\RETURN {$G\not\in L_3^2$} \ENDIF 
 \IF{$c$ has a successor $c_1$ strongly connected with $c$} 
   \IF {$\lambda(c_1)=+$}\STATE $\lambda(c)=-$\ENDIF
    \IF {$\lambda(c_1)=-$}\STATE $\lambda(c)=+$\ENDIF
  \ENDIF
   \IF{$c$ has a successor $c_1$ weakly connected with $c$ \AND
    $\lambda(c_1)=-$}\STATE $\lambda(c)=+$\ENDIF
    
  \IF{$\lambda(c)=\lambda(c')=-$}\RETURN {$G\not\in L_3^2$} \ENDIF 
  \ENDIF 
  \ENDFOR
\end{algorithmic}
\end{algorithm}

We consider these cases in the last stage. In fact, when some vertices are not labelled, we can fix their labels to either $+$ or $-$ in such a way that the labels alternate for the cliques $c,c'$ that strongly intersect.
 
Note that the labelling of $T_r$ terminates in time $O(n)$.

Then we find a $3$-uniform hypergraph $H$ such that $G=L_3^2(H)$.

\vspace{5mm}
{\it Construction of the $3$-uniform hypergraph}

\begin{fact}
Consider the labelled tree $T$ of a claw-free triangulated hypergraph. 
Then there exists $H$ such that $G=L_3^2(H)$.
\end{fact}

\begin{proof}
Given $T$ with the labelling of its vertices, we construct a labelling of the vertices of $G$ from the root $c_r$ to the leaves of $T_r$. We assume w.l.o.g. that $\lambda(c_r)=+$.
We label the vertices of $c_r$ as follows: since $\lambda(c_r)=+$, $c_r$ it is labelled positively with the labels $(1,2,3)$, $(1,2,4)$,$\ldots$,$(1,2,k_r)$. 
Let $c\in C,c\ne c_r$. We assume that the vertices of its predecessor $c'$ in $T_r$ are labelled.

Since $T$ is a tree, up to a permutation of the labels, we assume that the labels of $c'$ are taken into $\{1,2,\ldots,k\}$.
Consider the following two cases:

\begin{itemize}
    \item Suppose that $c'$ is labelled positively with $(1,2,3),(1,2,4),\ldots,(1,2,k)$ . 
    If $c$ strongly intersect $c'$ then 
    $\lambda(c)=-$. We set $(1,2,3),(1,2,4)$ the labels of $c\cap c'$. If $c$ contains three vertices, it is labelled as $(1,3,4)$.  If it contains four vertices, the last node is labelled as $(2,3,4)$. 
    
    If $c$ weakly intersect $c'$  from Fact \ref{K4+v} we have $\vert c\vert=3$. Let $(1,2,3)$ be the label of $c\cap c'$. If $\lambda(c)=-$ then the other labels of $c$ are $(1,3,a),(2,3,a)$, with $a$ different from any value in $c$ and $c'$. If $\lambda(c)=+$ then  the other labels of $c$ are $(1,3,a),(1,3,b)$, with $a,b$ different from any value in $c$ and $c'$. In any case, we obtain a valid label for the two cliques;

    \item Suppose that $c'$ is labelled negatively. 
    Let's consider the case in which $\vert c'\vert= 4$ and their vertices are $(1,2,3),(1,2,4),(1,3,4),(2,3,4)$: from Fact \ref{K4+v} $c$ strongly intersect $c'$ and $\lambda(c) = +$ must holds. Let $(1,2,3),(1,2,4)$ be the labels of $c\cap c'$. The other labels of $c$ are $(1,2,5),\ldots,(1,2,k)$. 
    
    Consider now the case in which $\vert c'\vert=3$ and their vertices are $(1,2,3),(1,2,4),(1,3,4)$. If $c$ strongly intersect $c'$, let $(1,2,3),(1,2,4)$ be the labels of $c\cap c'$. The other labels of $c$ are $(1,2,5),\ldots,(1,2,k)$. 
    
    On the other hand, if $c$ weakly intersects $c'$, let $(1,2,3)$ be the label of $c\cap c'$. The other labels of $c$ are $(2,3,5),\ldots,(2,3,k)$. 
    
    fIn case $c'$ has a second successor $c''$, such that the label of $c\cap c''$ is $(1,2,4)$, then the other labels of $c''$ are $(2,4,k+1),\ldots,(2,4,k+l)$. Recall that  $c'\ne c_r$ so $c'$ has at most two successors in $T_r$.
\end{itemize}
\end{proof}

To conclude, we estimate the complexity of the whole procedure. When $G$ has no cut edge the procedure takes a time $O(n)$. The algorithm in \cite{HT} gives the edge cuts and the biconnected components in time $O(m)$ when $G$ is connected. Hence the complexity of the algorithm is $O(m)$.

The proof is completed. 
\end{proof}

As an example, consider the graph depicted $G$ in Figure~\ref{fig:clique_tree} and its associated tree cliques.
Using the rules listed in the proof, we obtain the labelling shown in Figure~\ref{fig:labelled_tree}. From that label, it's easy to find an actual label of the vertices and conclude that $G \in L_3^2(H)$.

\begin{figure}
    \centering
    \includegraphics[width = \linewidth]{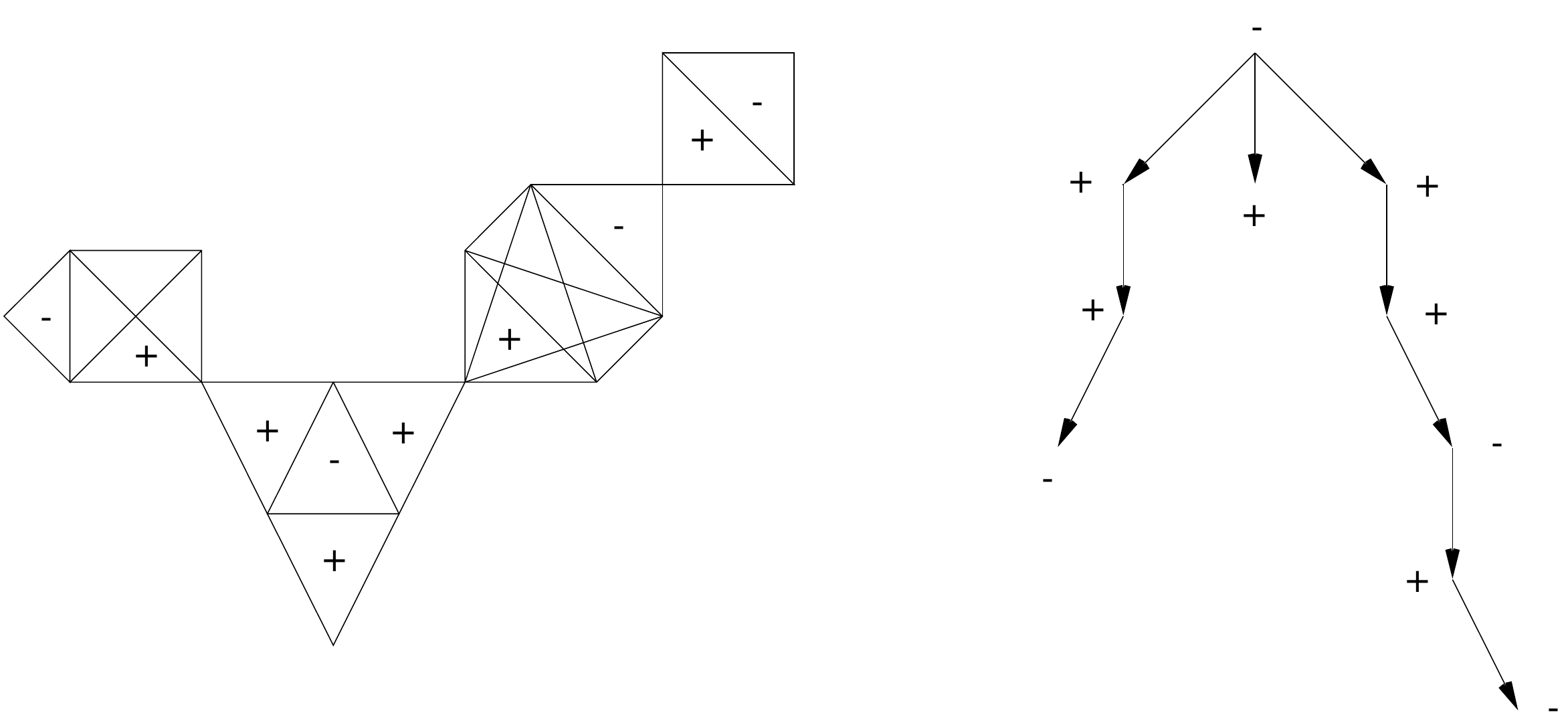}
    \caption{Labelling of the graph $G$ and its associated tree.}
    \label{fig:labelled_tree}
\end{figure}

\section{Further properties of $L_k^l$ class}\label{prop}

In this section we prove some further properties of graph belonging to certain $L_k^l$ classes. After giving some properties on the class $L_k^1$, we move to $L_3^2$ providing a complexity result for the hamiltonian cycles detection problem. In particular, Di Marco et al. proved in \cite{rec2inter} that hamiltonian cycles in $L_3^2$ are useful to assure the existence of a null label in the relative hypergraph.
We start giving some properties for a graph $G\in L_k^l$. 

\begin{fact}\label{pcliques}
If $G\in L_k^l$ then $G$ is $K_{1,p+1}$-free, $p=C_{k}^{l}$.
\end{fact}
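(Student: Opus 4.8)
The statement to prove is Fact~\ref{pcliques}: if $G\in L_k^l$ then $G$ is $K_{1,p+1}$-free, where $p=C_k^l=\binom{k}{l}$. The plan is to argue by contradiction. Suppose $G=L_k^l(H)$ for some $k$-uniform hypergraph $H$, and suppose $G$ contains an induced star $K_{1,p+1}$ with center $e$ and leaves $f_0,f_1,\dots,f_p$ (so $p+2$ vertices in total). Each of these corresponds to a hyperedge of $H$, and $e$ is a $k$-set. The key observation is that for each leaf $f_i$, since $ef_i\in E(G)$ we have $|e\cap f_i|=l$, so $e\cap f_i$ is an $l$-element subset of the $k$-set $e$. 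There are exactly $\binom{k}{l}=p$ such subsets. Since there are $p+1$ leaves, by the pigeonhole principle two distinct leaves, say $f_i$ and $f_j$, satisfy $e\cap f_i=e\cap f_j=:A$ with $|A|=l$.

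The heart of the argument is then to show $f_i$ and $f_j$ must be adjacent in $G$, contradicting that the star is induced (leaves of a claw/star are pairwise non-adjacent). We have $A\subseteq f_i$ and $A\subseteq f_j$ with $|A|=l$, so $|f_i\cap f_j|\ge l$. I need $|f_i\cap f_j|=l$ exactly to conclude $f_if_j\in E(G)$. This needs a short argument: if $|f_i\cap f_j|>l$, then $f_i\cap f_j$ is a set of size $>l$; but both $f_i$ and $f_j$ are $k$-sets, and I should check whether $f_i\cap f_j$ could properly contain $A$. Actually the cleanest route: suppose for contradiction the star is induced, so $f_if_j\notin E(G)$, meaning $|f_i\cap f_j|\ne l$. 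Combined with $|f_i\cap f_j|\ge|A|=l$, this forces $|f_i\cap f_j|\ge l+1$. Now consider $f_i\cap f_j$ as a subset of $f_i$; it contains $A$ and at least one more element $x\notin A$. But also $x\in f_j$. Since $A\subseteq e$ and $|e\cap f_i|=l=|A|$, the extra element $x\in f_i$ satisfies $x\notin e$; similarly $x\notin e$ from the $f_j$ side — consistent, no contradiction yet. So the naive pigeonhole gives $|f_i\cap f_j|\ge l$ but not equality.

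I expect this to be the main obstacle, and I think the fix is to be slightly more careful: the claim as literally stated ($K_{1,p+1}$-free with $p=\binom{k}{l}$) should be read as saying $K_{1,\binom{k}{l}+1}$ cannot appear. Re-examining: with $\binom{k}{l}+1$ leaves we get two leaves $f_i,f_j$ with the same trace $A$ on $e$. To force adjacency we do need $|f_i\cap f_j|=l$. The resolution is that among the leaves sharing a common trace $A$, we can iterate: if there were three or more leaves all with trace $A$, pairwise non-adjacency plus all containing $A$ would eventually be impossible; more directly, for the sharp bound one observes that two hyperedges both containing the $l$-set $A$ and having pairwise intersection of size $\ge l+1$ would — together with $e$ — already over-constrain the configuration (cf. the style of Propositions~\ref{posneg} and \ref{intcliq}). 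I would complete this by showing: if $|f_i\cap f_j|\ge l+1$ then, picking the smallest common superset, $f_i$ and $f_j$ together with $e$ yield a $K_3$ in $G$ only if the intersection sizes are exactly $l$, and a case analysis (as in Fact~\ref{prism}/Fact~\ref{sun}) rules out the remaining possibility, so in fact $f_i f_j\in E(G)$, contradicting the induced star. Hence no induced $K_{1,p+1}$ exists, i.e. $G$ is $K_{1,p+1}$-free.

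In summary: (1) assume an induced $K_{1,p+1}$ with center $e$; (2) each leaf meets $e$ in one of the $\binom{k}{l}=p$ many $l$-subsets of $e$; (3) pigeonhole gives two leaves with the same trace $A$; (4) show these two leaves are forced to be adjacent in $G$ — this is the delicate step requiring the uniform-hypergraph structure, handled by a short case analysis on $|f_i\cap f_j|$; (5) contradiction with inducedness, so $G$ is $K_{1,p+1}$-free. For the special case $k=3,l=2$ this recovers the already-noted fact that graphs in $L_3^2$ are $K_{1,4}$-free.
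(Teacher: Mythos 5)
Your setup is exactly the paper's own proof: take the centre $e$ of an induced $K_{1,p+1}$, observe that each leaf meets $e$ in one of the $p=\binom{k}{l}$ many $l$-subsets of $e$, and apply the pigeonhole principle to get two leaves $f_i,f_j$ with the same trace $A$ on $e$. The paper then simply writes ``so $e_ie_j\in E$'' at precisely the point where you stall, and your instinct that something is missing there is correct: $A\subseteq f_i\cap f_j$ only gives $|f_i\cap f_j|\ge l$, while adjacency in $L_k^l(H)$ requires $|f_i\cap f_j|=l$ exactly. Step (4) of your plan is never actually carried out --- you gesture at ``a case analysis in the style of Fact~\ref{prism} and Fact~\ref{sun}'', but those facts are specific to $L_3^2$ and no such analysis is supplied.

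Moreover, the gap cannot be closed in general, because the statement is false whenever $l<k-1$. Take $k=3$, $l=1$, so $p=3$, and the $3$-uniform hypergraph with hyperedges $e=\{1,2,3\}$, $f_1=\{1,4,5\}$, $f_2=\{1,4,6\}$, $f_3=\{2,7,8\}$, $f_4=\{3,9,10\}$: each $f_i$ meets $e$ in exactly one vertex, while $|f_1\cap f_2|=2$ and all other pairs of leaves are disjoint, so $L_3^1(H)$ is an induced $K_{1,4}=K_{1,p+1}$. Here $f_1$ and $f_2$ have the same trace $\{1\}$ on $e$ but intersect in \emph{more} than $l$ elements --- exactly the escape you worried about. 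The argument does close, trivially, when $l=k-1$: two distinct $k$-sets intersect in at most $k-1$ elements, so $|f_i\cap f_j|\ge l=k-1$ forces $|f_i\cap f_j|=l$, hence $f_if_j\in E$, contradicting inducedness. That covers the case the paper actually needs ($L_3^2$ is $K_{1,4}$-free), but both your argument and the paper's require the hypothesis $l=k-1$ (or some other repair) for the Fact as stated.
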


\begin{proof}
Let $G\in L_k^l$ and $H$ a preimage of $G$.  Suppose that $G$ contains $K_{1,p+1}$ with a central vertex $v$ and $N(v) = \{e_1\ldots,e_{p+1}\}$. Then two edges $e_i,e_j,1\le i<j\le p+1$  exit such that $\vert e_i\cap e_j\vert\ge p$, so $e_ie_j \in E$, a contradiction.
\end{proof}

\subsection{NP-complete problems in the class $L^1_k$}
Given a linear $k$-uniform hypergraph $H=(V,E)$ one can build a linear $(k+1)$-uniform hypergraph $H'=(V',E')$ as follows. For each hyperedge $e_i\in E,1\le i\le m,$ we create the hyperedge $e'_i\in E',e'_i=e_i\cup\{a_i\}$. Then we take $V'=V\cup_{1\le i\le m}\{a_i\}$.

From this construction and the NP-complete results we give below we obtain the following properties.

In \cite{rec1inter} it is proved that recognizing whether a graph $G\in L_3^1$ is $NP$-complete. Hence we obtain.

\begin{prop}\label{NPRec}
For any fixed $k\ge 3$, deciding wether a graph is n $L_k^1$ is  NP-complete.
\end{prop}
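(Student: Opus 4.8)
\textbf{Proof plan for Proposition \ref{NPRec}.}
The plan is to combine the known hardness of recognizing $L_3^1$ (Hlin\v en\'y--Kratochv\'il, \cite{rec1inter}) with the ``one new vertex per hyperedge'' construction described just above the statement, using it inductively to climb from $k$ to $k+1$. The base case $k=3$ is exactly \cite{rec1inter}. For the inductive step, fix $k\ge 3$, assume recognizing $L_k^1$ is NP-complete, and reduce it to recognizing $L_{k+1}^1$.

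First I would note membership in NP: given $G$ with $n$ vertices and $m$ edges, a candidate $\lambda_{k+1}^1$-labelling assigns to each vertex a $(k+1)$-set, which is a polynomial-size certificate, and checking conditions ($i$)--($iii$)-type intersection constraints for every edge and non-edge takes polynomial time; this also re-establishes NP membership for $k=3$. For hardness, given a graph $G$ (the $L_k^1$-instance) I would take $G$ itself as the $L_{k+1}^1$-instance and argue the equivalence $G\in L_k^1 \iff G\in L_{k+1}^1$. The forward direction uses precisely the construction above: if $H=(V,E)$ is a linear $k$-uniform preimage of $G$ with edges $e_1,\dots,e_m$, set $e_i'=e_i\cup\{a_i\}$ with the $a_i$ fresh and pairwise distinct; then $|e_i'\cap e_j'|=|e_i\cap e_j|$ for $i\ne j$, so $H'=(V\cup\{a_1,\dots,a_m\},\{e_1',\dots,e_m'\})$ is a $(k+1)$-uniform preimage of $G$. (Linearity of $H$ is not actually needed for this direction; one only needs that the $a_i$ are all new.)

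The more delicate direction is the converse: from a $(k+1)$-uniform preimage $H'$ of $G$ I must recover a $k$-uniform preimage of $G$. I would choose, for each hyperedge $e_i'$ of $H'$, one element $x_i\in e_i'$ to delete, setting $e_i=e_i'\setminus\{x_i\}$, and I need the deletions chosen so that $|e_i\cap e_j|=1$ exactly when $e_i'e_j'\in E(G)$ and $|e_i\cap e_j|=0$ otherwise, while also keeping the $e_i$ pairwise distinct (no parallel hyperedges). Since $G\in L_{k+1}^1$ and, by Fact \ref{pcliques}, such $G$ has bounded maximum degree, each element of $V(H')$ lies in a bounded number of hyperedges; I expect this boundedness to be the lever that makes a safe deletion possible at each hyperedge, e.g. by deleting an element of $e_i'$ that is ``private'' to $e_i'$ or shared with as few other hyperedges as possible. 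The main obstacle is exactly this step: verifying that one can always delete a vertex from each $(k+1)$-edge without destroying an intersection of size $1$ (turning an edge of $G$ into a non-edge) or merging two hyperedges; I would handle it by a careful local case analysis on how $e_i'$ meets its neighbours, invoking the degree bound $p=C_k^1=k$ on the neighbourhoods in $G$ to guarantee enough ``room'' inside each $(k+1)$-set. Once the converse is settled the reduction is clearly polynomial (the instance graph is unchanged), which completes the induction and the proof. \hfill$\Box$
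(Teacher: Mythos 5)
There is a genuine gap, and in fact the reduction you propose cannot be repaired as stated. Your plan is to use the identity map $G\mapsto G$ as the reduction from $L_k^1$-recognition to $L_{k+1}^1$-recognition, which requires the equivalence $G\in L_k^1 \iff G\in L_{k+1}^1$ for \emph{every} input graph $G$. The forward implication is exactly the paper's ``add a fresh vertex $a_i$ to each hyperedge'' construction and is fine (and you are right that linearity is not needed). But the converse implication is false: by the very Fact~\ref{pcliques} you invoke, every graph in $L_k^1$ is $K_{1,k+1}$-free, whereas $K_{1,k+1}\in L_{k+1}^1$ (take a central $(k+1)$-set $\{1,\dots,k+1\}$ and $k+1$ pairwise disjoint leaves $e_i=\{i\}\cup\{\text{fresh elements}\}$). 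So $L_k^1\subsetneq L_{k+1}^1$, the identity reduction answers YES on the input $K_{1,k+1}$ where the $L_k^1$-recognizer must answer NO, and no ``careful local case analysis'' for deleting one element per hyperedge can succeed, since for such graphs no $k$-uniform preimage exists at all. The step you flag as ``the main obstacle'' is not merely delicate --- it is an impossibility.

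For fairness: the paper itself gives essentially no more than the forward construction plus the citation to \cite{rec1inter}, so it does not explicitly close this gap either. A correct argument cannot go through class equality; it must instead show that the \emph{specific} hard instances $G_A$ produced by the reduction of Hlin\v{e}n\'y and Kratochv\'{\i}l satisfy $G_A\in L_{k}^1$ if and only if the source instance is satisfiable, for every fixed $k\ge 3$ --- i.e., that those gadgets remain rigid when the uniformity is raised --- or else introduce new gadgets forcing any $(k+1)$-uniform preimage to ``waste'' one private vertex per hyperedge. Your write-up would need to replace the identity reduction with an argument of that kind.
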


 Other significant NP-complete results for the line graphs are the following.
\begin{prop}\label{NPLin3}
The problems Hamiltonian cycle \cite{HCLine},  $3$-coloring \cite{3CLine},  Minimum domination \cite{DomLine} are NP-complete in $L_2^1$.
\end{prop}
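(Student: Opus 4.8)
The plan is to treat the three problems separately, in each case setting up a polynomial-time correspondence between the vertex problem on the line graph $L_2^1(G)$ and a suitable \emph{edge} problem on the underlying graph $G$, and then invoking the cited NP-completeness results for those edge problems. The feature that keeps every reduction inside the class is that for \emph{any} graph $G$ the graph $L_2^1(G)$ is, by definition, a line graph; hence all instances we produce belong to $L_2^1$ automatically. Membership in NP is immediate for each problem, since a cyclic ordering of the vertices, a partition of the vertex set into three independent sets, or a vertex subset of bounded cardinality are all polynomially checkable certificates. Thus it suffices to argue NP-hardness in each case.

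For $3$-colouring I would use the identity $\chi(L_2^1(G))=\chi'(G)$: a proper $3$-vertex-colouring of $L_2^1(G)$ is exactly a proper $3$-edge-colouring of $G$, because two edges of $G$ are adjacent as vertices of $L_2^1(G)$ precisely when they share an endpoint. Deciding $3$-edge-colourability is NP-complete already for cubic graphs by Holyer's theorem \cite{3CLine}, so $3$-colouring is NP-complete on $L_2^1$. For minimum domination I would use the analogous exact correspondence: a vertex set $F$ of $L_2^1(G)$ is dominating iff the corresponding set of edges of $G$ is an \emph{edge dominating set}, i.e. every edge of $G$ shares an endpoint with some edge of the set. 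Consequently the domination number of $L_2^1(G)$ equals the edge domination number of $G$, and the NP-completeness of edge domination \cite{DomLine} transfers verbatim. Both of these cases are settled by one-line parameter identities.

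The Hamiltonian-cycle case is the one I expect to require genuine work, because the relevant correspondence is not a verbatim ``edge version'' of Hamiltonicity. Here I would rely on the Harary--Nash-Williams characterisation: $L_2^1(G)$ has a Hamiltonian cycle iff $G$ possesses a \emph{dominating closed trail}, i.e. a closed trail meeting at least one endpoint of every edge. Indeed a Hamiltonian cycle of $L_2^1(G)$ is a cyclic listing of all edges of $G$ with consecutive edges sharing a vertex, and translating this back to $G$ yields exactly the dominating-closed-trail condition. NP-hardness then follows from a reduction from Hamiltonicity on a restricted class (for instance cubic or planar graphs) in which local gadgets are engineered so that a dominating closed trail of the constructed graph exists iff the original instance is Hamiltonian \cite{HCLine}. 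The main obstacle is the correctness of this last reduction: unlike the $3$-colouring and domination cases, where the line-graph parameter equals an already-NP-complete edge parameter by an immediate identity, Hamiltonicity of $L_2^1(G)$ is governed by an auxiliary structure on $G$, so NP-hardness must be obtained through a dedicated gadget construction whose correctness has to be verified directly. Class membership, by contrast, causes no difficulty in any of the three cases, since $L_2^1(G)$ is a line graph for every $G$ and hence lies in $L_2^1$ by construction.
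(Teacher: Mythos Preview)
Your argument is correct, but note that the paper does not actually give a proof of this proposition: it simply records the statement together with the three literature citations \cite{HCLine,3CLine,DomLine} and moves on. What you have written is therefore an elaboration of the content of those references rather than a competing proof, and the elaboration is accurate. In particular, the identities $\chi(L_2^1(G))=\chi'(G)$ and $\gamma(L_2^1(G))=\gamma_e(G)$ immediately reduce $3$-colouring and minimum domination on line graphs to the edge versions treated in \cite{3CLine} and \cite{DomLine}, exactly as you say; and for Hamiltonicity your appeal to the Harary--Nash-Williams characterisation plus a gadget reduction is precisely the route taken in \cite{HCLine}. The only remark I would add is that \cite{HCLine} in fact proves NP-completeness of the edge Hamiltonian \emph{path} problem, from which the cycle version on $L_2^1$ follows by a standard padding; your sketch is compatible with this but you may wish to make that step explicit.
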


It follows from Proposition \ref{NPLin3}  the next complexity results.
\begin{prop}\label{NPLink}
For any fixed $k\ge 2$, the problems Hamiltonian cycle, $3$-coloring, Minimum domination are NP-complete in $L_k^1$.
\end{prop}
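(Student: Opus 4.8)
The plan is to reduce the line-graph recognition problem from $L_2^1$ to the problem of recognizing $L_k^1$ for each fixed $k\ge 2$, and to transfer hardness of the three combinatorial problems (Hamiltonian cycle, $3$-coloring, minimum domination) along the same chain of constructions. The engine is the lifting construction already described just before the statement: from a linear $k$-uniform hypergraph $H=(V,E)$ one builds a linear $(k+1)$-uniform hypergraph $H'=(V',E')$ by adding one fresh private vertex $a_i$ to each hyperedge $e_i$. The first observation I would record is that this operation does not change the $1$-intersection graph: $\vert e_i'\cap e_j'\vert=\vert e_i\cap e_j\vert$ for $i\ne j$ since the $a_i$'s are distinct and new, so $L_{k+1}^1(H')=L_k^1(H)$. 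Iterating, any graph in $L_2^1$ lies in $L_k^1$ for every $k\ge 2$, and conversely the preimage can be ``peeled'' one layer at a time, though one must be careful: a general $k$-uniform preimage need not be linear, so for the converse direction I would instead argue directly on graphs.

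For Proposition \ref{NPLink} I would start from a graph $G$ together with an explicit line-graph representation $G=L_2^1(H_0)$ for a (simple, hence automatically linear) graph $H_0$ — which is exactly the hypothesis guaranteeing the structural property we need, since deciding Hamiltonicity etc.\ in $L_2^1$ means the instance comes with such a witness. Apply the lifting $k-2$ times to obtain a linear $k$-uniform hypergraph $H$ with $L_k^1(H)=L_2^1(H_0)=G$. Thus the \emph{same} graph $G$ witnesses membership in $L_k^1$, and since the three problems (Hamiltonian cycle, $3$-coloring, minimum domination) are properties of $G$ alone, their instances in $L_2^1$ map identically to instances in $L_k^1$; the reduction is the identity on graphs together with the lifted hypergraph as certificate, clearly polynomial. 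Hardness in $L_2^1$ is Proposition \ref{NPLin3}, and membership in NP is immediate (guess the hypergraph, or for the combinatorial problems guess the cycle/coloring/dominating set), so NP-completeness follows. Proposition \ref{NPRec} is the analogous statement for the \emph{recognition} problem: given an arbitrary graph $G$, I would show $G\in L_k^1$ iff $G\in L_3^1$. One direction is the lifting above (applied $k-3$ times); for the other, given a $k$-uniform preimage $H$ of $G$ with $k>3$, I would need to strip it back down to a $3$-uniform preimage.

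The step I expect to be the main obstacle is precisely this ``peeling'' direction of the recognition equivalence, i.e.\ showing that $G\in L_k^1$ forces $G\in L_3^1$. The lifting construction produces only \emph{linear} hypergraphs with private pendant vertices, but an arbitrary preimage $H$ of $G$ in $L_k^1$ may have high-multiplicity vertex pairs and complicated overlap structure, so one cannot simply delete an arbitrary vertex from each hyperedge and hope intersections are preserved. The cleanest fix is to observe that the hardness reduction underlying Proposition \ref{NPRec} need only go \emph{one way}: $L_3^1$-recognition is NP-complete \cite{rec1inter}, and $L_3^1$-instances inject into $L_k^1$-instances via $k-3$ liftings (identity on $G$, preserving both yes- and no-answers — the no direction because if $G=L_k^1(H)$ for the lifted-type $H$ we could read off a $3$-uniform preimage by discarding the added vertices). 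To make even that no-direction rigorous without a full structural analysis, I would restrict the reduction's image to graphs arising as line graphs of linear $3$-uniform hypergraphs (which is already the hard fragment in \cite{rec1inter}), so that the inverse peeling is well-defined on exactly those instances. Membership of $L_k^1$-recognition in NP is clear. The remaining details — bookkeeping the vertex sets $V'$, checking $\vert e_i'\cap e_j'\vert$ is unchanged, and confirming polynomiality of the $(k-2)$- or $(k-3)$-fold iteration for fixed $k$ — are routine.
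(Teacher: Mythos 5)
Your argument for this proposition is correct and is essentially the paper's own: the one-fresh-vertex lifting preserves pairwise intersection sizes, so the hard $L_2^1$ instances of Proposition \ref{NPLin3} sit inside $L_k^1$ unchanged, and hardness transfers by the identity reduction while NP-membership is immediate. Your worries about the converse ``peeling'' direction concern only the recognition problem (Proposition \ref{NPRec}), not this statement, for which the one-way containment $L_2^1\subseteq L_k^1$ suffices.
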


\subsection{Hamiltonian cycle detection in $L_3^2$}\label{hamilton}
In \cite{nullLabel} authors study the null label problem and prove a sufficient condition for a $3-$hypergraph to be null. In particular, the result uses Hamiltonian graphs in $L_3^2$. Here, we show that deciding if $G\in L_3^2$ is Hamiltonian is $NP$-complete, limiting the possible application of the result.

\begin{theorem}
The Hamiltonian cycle problem is $NP$-complete in $L_3^2$ even for graphs $G=L_3^2(H)$ where $m(H)=3$.
\end{theorem}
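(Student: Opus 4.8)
The plan is to reduce from the Hamiltonian cycle problem on a suitable $NP$-complete class of graphs, mimicking the flavour of Proposition~\ref{NPLink}, but tailored so that the constructed instance actually lies in $L_3^2$ and has a preimage $H$ with $m(H)=3$. The natural starting point is the Hamiltonian cycle problem on $3$-regular (cubic) graphs, or on line graphs $L_2^1(G')$, both of which are $NP$-complete. First I would take a cubic graph $G'$ for which Hamiltonicity is to be decided, and construct a $3$-uniform hypergraph $H$ (equivalently, directly construct the graph $G=L_3^2(H)$) by replacing each vertex and each edge of $G'$ by a small gadget, in such a way that Hamiltonian cycles of $G$ correspond bijectively (or at least in a Hamiltonicity-preserving way) to Hamiltonian cycles of $G'$. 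The gadgets must be built from the pieces already shown to be realizable in $L_3^2$ — triangles, diamonds, butterflies, prisms, suns (Fact~\ref{prism}, Fact~\ref{sun}, Proposition~\ref{butter}, Corollary~\ref{diam}) — so that the global labelling can be assembled from the local realizations, exactly as in the constructions of the previous section.

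The key steps, in order, would be: (1) fix the source problem as Hamiltonian cycle in cubic graphs and recall its $NP$-completeness; (2) design an edge gadget that forces a Hamiltonian cycle to traverse it in one of a bounded number of canonical ways, and a vertex gadget of degree three that forces a Hamiltonian cycle to ``pass through'' choosing exactly two of its three incident edge-gadgets — a standard trick is a gadget whose only Hamiltonian traversals pair up the three connection points two at a time; (3) verify that the resulting graph $G$ is in $L_3^2$ by explicitly exhibiting a $\lambda_3^2$-labelling, building it gadget by gadget and checking consistency at the interfaces using Property~\ref{posneg} and Proposition~\ref{intcliq}; (4) check that in this labelling every pair of vertices of $H$ has multiplicity at most $3$, and that some pair attains $3$ (so the statement ``even for $m(H)=3$'' is meaningful and tight for the construction); (5) prove the two directions of the reduction — $G'$ Hamiltonian $\Rightarrow$ $G$ Hamiltonian, by lifting a Hamiltonian cycle of $G'$ through the canonical gadget traversals, and conversely $G$ Hamiltonian $\Rightarrow$ $G'$ Hamiltonian, by projecting and using the fact that the gadgets admit no ``cheating'' traversals; (6) observe the construction is polynomial in the size of $G'$.

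The main obstacle I expect is step (3) combined with (2): designing gadgets that simultaneously (a) enforce the desired Hamiltonian-path routing behaviour, (b) are claw-free-compatible or at least induce only subgraphs known to be realizable, and (c) admit a globally consistent $2$-labelling with controlled multiplicity. The tension is that $L_3^2$ graphs are quite restricted — every edge lies in at most two maximal cliques (Corollary~\ref{twomax}), two cliques intersect in at most two vertices (Proposition~\ref{intcliq}), and the positive/negative alternation (Property~\ref{posneg}) rigidly constrains how gadgets can be glued — so there is little freedom, and one must be careful that the labelling of one gadget does not clash with a neighbouring one. I would handle this by keeping the gadgets ``label-local'': each gadget uses a private block of labels except for a small shared interface (one or two labels, corresponding to weak or strong intersection), so that consistency reduces to a finite case check at each interface, analogous to the butterfly/prism/sun case analyses already carried out. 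The multiplicity bound $m(H)=3$ would then follow by inspecting how many hyperedges can share a common pair of vertices — at most those in one clique plus the one or two hyperedges of an adjacent gadget meeting it in that pair — and choosing the gadget sizes so this is exactly $3$.

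Secondarily, step (5)'s ``no cheating'' direction can be delicate: one must rule out Hamiltonian cycles of $G$ that exploit the internal structure of a gadget in an unintended way (e.g. entering and leaving a vertex gadget through the ``wrong'' pair, or using a shortcut through a shared clique). This is the usual bookkeeping burden of gadget reductions and I would dispatch it by making each gadget's set of Hamiltonian traversals between prescribed terminals small enough to enumerate, showing each corresponds to a legal local choice in $G'$.
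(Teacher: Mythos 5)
Your high-level strategy coincides with the paper's: both reduce from the Hamiltonian cycle problem on cubic graphs, and both want a degree-three vertex gadget whose Hamiltonian traversals select exactly two of the three incident connections. However, your proposal stops exactly where the actual content of the proof begins: you never commit to a concrete gadget, and the entire argument lives in that choice. The paper's gadget is as simple as possible: for a vertex $v$ of the cubic graph $G'$ with neighbours $u,w,t$, take the three hyperedges $\{v,\bar v,u\},\{v,\bar v,w\},\{v,\bar v,t\}$; these pairwise share the pair $\{v,\bar v\}$ and hence form a triangle $K_v$ in the $2$-intersection graph, while for each edge $uv\in E'$ the hyperedges $\{v,\bar v,u\}$ and $\{u,\bar u,v\}$ share exactly $\{u,v\}$ and hence are adjacent. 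A Hamiltonian cycle of $G$ must enter and leave each triangle $K_v$ through two of its three vertices, sweeping the third internally, which is precisely the two-of-three selection you describe; the correspondence with Hamiltonian cycles of $G'$ is then immediate in both directions.

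The more substantive issue is that several of the obstacles you flag as the ``main difficulty'' are artifacts of the wrong order of construction. You propose to build the graph $G$ from realizable pieces and then verify $G\in L_3^2$ by assembling a $\lambda_3^2$-labelling, policing consistency at interfaces via Property~\ref{posneg} and Proposition~\ref{intcliq}. If instead you construct the hypergraph $H$ explicitly (as above) and \emph{define} $G=L_3^2(H)$, membership in $L_3^2$ is true by construction and there is nothing to check; likewise $m(H)=3$ is read off directly, since the pair $\{v,\bar v\}$ lies in exactly three hyperedges and every other pair lies in at most two. So the gap is not that your plan would fail, but that it is missing the one idea that makes the proof a few lines long --- labelling the hyperedges so that the triangle structure and the edge adjacencies are forced simultaneously --- and in its place substitutes a speculative gadget-engineering programme whose difficulties evaporate once the hypergraph is built first.
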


\begin{proof}
We give a polynomial transformation from the Hamiltonian cycle problem in cubic graphs which is $NP$-complete \cite{GJ}. From a cubic graph $G'=(V',E')$, we define  $G\in L_3^2$ as follows: to each vertex $v\in V'$ with neighbours $u$, $w$, and $t$ corresponds $K_v$, i.e., the complete graph with the three vertices $(v,{\overline v},  u),(v,{\overline v},  w),(v,{\overline v},  t)$. For each edge $uv\in E'$, we add the edge $(v,{\overline v},  u)(u,{\overline u},  v)$. It is straightforward to verify that  $G\in L_3^2$, and that $H$, the hypergraph such that $G=L_3^2(H)$, satisfies $m(H)=3$. Moreover, $G'$ has a hamiltonian cycle if and only if $G$ has one.
\end{proof}

\begin{rmk}
In \cite{GJT} is proved that the Hamiltonian cycle problem  remains $NP$-complete for cubic planar graphs. Since $K_3$, the subgraph replacing each vertex in our reduction is planar, it is  straightforward, using the same transformation, that the Hamiltonian cycle problem is $NP$-complete in $L_3^2$ even for planar graphs.
\end{rmk}

\subsection{Recognition problem for trees}\label{tree}

We are interested in the recognition problem for trees in $L_3^2$. 

\begin{prop}
Let $T$ be  a tree. $T\in L_3^2$ if and only if $\Delta(T)\le 3$. 
\end{prop}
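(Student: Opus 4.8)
The plan is to prove the two directions separately, using the necessary conditions already collected in Section~\ref{def}--\ref{sec:claw_free} for the forward direction, and an explicit labelling for the converse.

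\textbf{Necessity ($T\in L_3^2 \Rightarrow \Delta(T)\le 3$).} By Fact~\ref{pcliques} with $k=3,l=2$ we have $p=C_3^2=3$, so any graph in $L_3^2$ is $K_{1,4}$-free. In a tree, the neighbourhood of a vertex $v$ of degree $d$ is an independent set, so $G[N[v]]=K_{1,d}$ is an induced star. Hence if $\Delta(T)\ge 4$ then $T$ contains an induced $K_{1,4}$, contradicting $K_{1,4}$-freeness. Therefore $\Delta(T)\le 3$.

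\textbf{Sufficiency ($\Delta(T)\le 3 \Rightarrow T\in L_3^2$).} I would give an explicit construction of a $3$-uniform hypergraph $H$ with $T=L_3^2(H)$. The natural idea is to root $T$ at an arbitrary vertex $r$ and build the $\lambda_3^2$-labelling of the vertices of $T$ (each vertex of $T$ is a hyperedge, i.e.\ a $3$-set) by a breadth-first or depth-first traversal: since $T$ has no triangle, every edge $uv$ of $T$ must be a \emph{strong} connection, i.e.\ $|\varphi(u)\cap\varphi(v)|=1$, and non-adjacent vertices must receive $3$-sets meeting in at most one element. Assign to the root $r$ the set $\{x_r,1,2\}$ where $x_r$ is a fresh ``private'' symbol; then, processing vertices level by level, when a vertex $u$ with already-assigned label has up to three children $w_1,w_2,w_3$ (at most two if $u\ne r$, since $u$ also has a parent and $d(u)\le 3$), give each child $w_i$ a label that shares exactly the private symbol $x_u$ of $u$ and otherwise uses two brand-new symbols: $\varphi(w_i)=\{x_u, 2i-1+N, 2i+N\}$ for a running counter $N$, together with a fresh private symbol $x_{w_i}$ passed down to its own children. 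The key is that consecutive vertices share exactly one element (their common ``hinge'' symbol $x_u$), while any two vertices at distance $\ge 2$ share at most one element and distinct vertices get distinct labels, because all the non-hinge coordinates are globally fresh.

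\textbf{Main obstacle.} The delicate point is verifying that \emph{non-adjacent} pairs of vertices do not accidentally intersect in two or more elements, so that they are genuinely non-edges of $L_3^2(H)$, and also that no two vertices get the identical $3$-set. This must be handled by a careful bookkeeping of which symbol is shared along which tree-edge: each symbol $x_u$ is used only in $\varphi(u)$ and in the labels of the neighbours of $u$ in $T$; since $T$ is a tree, two vertices $u,u'$ at tree-distance $2$ with common neighbour $z$ share only $x_z$, and vertices at distance $\ge 3$ share nothing. So I would organize the proof as: (i) set up the rooted traversal and the labelling rule above; (ii) check local consistency edge by edge ($|\varphi(u)\cap\varphi(w)|=1$ for tree-edges); (iii) check global injectivity and the non-edge condition using the ``private symbol'' invariant; (iv) conclude $T=L_3^2(H)$ where $H$ has vertex set the union of all symbols used and hyperedges $\{\varphi(v):v\in V(T)\}$. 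The degree bound $\Delta(T)\le 3$ enters precisely in step (i): a vertex has at most three tree-neighbours, matching the $C_3^2=3$ distinct $2$-subsets of a $3$-set available as hinges, which is exactly what Fact~\ref{pcliques} shows is necessary and what makes the construction possible.
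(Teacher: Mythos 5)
Your necessity argument is correct and coincides with the paper's: by Fact~\ref{pcliques} every graph in $L_3^2$ is $K_{1,4}$-free, and a tree vertex of degree at least $4$ together with four of its (pairwise non-adjacent) neighbours induces a $K_{1,4}$.

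The sufficiency direction, however, has a genuine error: you use the wrong intersection condition for adjacency. In $L_3^2(H)$ two hyperedges are adjacent if and only if they share \emph{exactly two} vertices; you appear to have imported the convention of the $2LI$ gadget, where a strong edge between $2$-sets means a shared single label. Your rule assigns to each child $w_i$ of $u$ a label consisting of the single hinge symbol $x_u$ plus two globally fresh symbols, so $|\varphi(u)\cap\varphi(w_i)|=1$ for every tree edge $uw_i$, which makes $uw_i$ a \emph{non-edge} of $L_3^2(H)$: the hypergraph you construct realizes the edgeless graph on $|V(T)|$ vertices, not $T$. The inconsistency is visible in your own closing remark, where you correctly count the three $2$-subsets of a $3$-set as the available hinges --- the hinge carried by a tree edge must be a \emph{pair} of symbols, not a single one. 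The construction is repairable: give each neighbour of $u=\{a,b,c\}$ a label containing a distinct one of the pairs $\{a,b\},\{a,c\},\{b,c\}$ together with a fresh third symbol, and then verify that vertices at tree-distance two share exactly one symbol and more distant ones share at most one. That repaired version is essentially the paper's proof, which runs the same idea bottom-up: remove a leaf $v$, label $T-v$ by induction, and give $v$ a label meeting its neighbour $w$ in exactly two elements (chosen to avoid the pairs already used by the at most two other neighbours of $w$) and every other label in at most one element.
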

\begin{proof}
Let $T$ be a tree. If $\Delta(T)\ge 4$ then $T$ contains $K_{1,4}$ as an induced subgraph and so $T\not\in L_3^2$. Now $\Delta(T)\le 3$. We use induction on $n$, the number of vertices of $T$. The cases $n=1$ and $n=2$ are trivial. Let $v$ be a leaf of $T$. By our induction hypothesis, $T-v$ has a $\lambda_3^2$-labelling.
Let $w$ be the neighbour of $v$ in $T$. In $T-v$, $w$ has degree at most two. Without loss of generality, let $w=\{1,2,3\}$ and $w'=\{1,2,4\}$ be the labelling of a neighbor $w'$ of $w$ in $T-e$. When $w'$ is the unique neighbor of $w$ in $T_e$ then $v=\{2,3,5\}$. Else $w''$ is the second neighbour of $w$ in $T_e$. Let $w''=\{1,3,5\}$. Then $v=\{2,3,6\}$. 
\end{proof}


\end{document}